\newcommand{\vol}{\textnormal{vol}}
\def\f12{\frac 1 2}
\def\a{\alpha}
\def\b{\beta}
\def\ga{\gamma}
\def\ep{\epsilon}
\def\si{\sigma}
\def\Si{\Sigma}
\def\om{\omega}
\def\Lb{\underline{L}}
\def\pa{\partial}
\def\les{\lesssim}
\newcommand{\D}{\mbox{$D \mkern-13mu /$\,}}
\newtheorem{thm}{Theorem}
\newtheorem{lem}{Lemma}
\newtheorem{remark}{Remark}
\begin{document}

\title{On global behavior of solutions of the Maxwell-Klein-Gordon equations}
\date{}
\author{Shiwu Yang}
\maketitle

\begin{abstract}
 It is known that the Maxwell-Klein-Gordon equations in $\mathbb{R}^{3+1}$ admit global solutions with finite energy data. In this paper, we present a new approach to study the asymptotic behavior of these global solutions. We show the quantitative energy flux decay of the solutions with data merely bounded in some weighted energy space. We also establish an integrated local energy decay and a hierarchy of $r$-weighted energy decay. The results in particular hold in the presence of large total charge. This is the first result to give a complete and precise description of the global behavior of large nonlinear charged scalar fields.
\end{abstract}

\section{Introduction}
In this paper, we study the asymptotic behavior of solutions to the Maxwell-Klein-Gordon equations on $\mathbb{R}^{3+1}$ with large Cauchy data. To define the equations, let $A=A_\mu dx^\mu$ be a $1$-form. The covariant derivative associated to this 1-form is
\begin{equation*}
 %\label{defofD}
D_\mu =\pa_\mu+\sqrt{-1}A_\mu,
\end{equation*}
which can be viewed as a $U(1)$ connection on the complex line bundle over $\mathbb{R}^{3+1}$ with the standard flat metric $m_{\mu\nu}$. Then the curvature $2$-form $F$ is given by
\begin{equation*}
 %\label{defofF}
F_{\mu\nu}=-\sqrt{-1}[D_{\mu}, D_{\nu}]=\pa_\mu A_\nu-\pa_\nu A_\mu=(dA)_{\mu\nu}.
\end{equation*}
This is a closed $2$-form, that is, $F$ satisfies the Bianchi identity
\begin{equation}
\label{bianchi}
 \pa_\ga F_{\mu\nu}+\pa_\mu F_{\nu\ga}+\pa_\nu F_{\ga\mu}=0.
\end{equation}
The Maxwell-Klein-Gordon equations (MKG) is a system for the connection field $A$ and the complex scalar field $\phi$:
\begin{equation}
 \label{EQMKG}\tag{MKG}
\begin{cases}
\pa^\nu F_{\mu\nu}=\Im(\phi \cdot\overline{D_\mu\phi})=J_\mu;\\
D^\mu D_\mu\phi=\Box_A\phi=0.
\end{cases}
\end{equation}
These are Euler-Lagrange equations of the functional
\[
L[A, \phi]=\iint_{\mathbb{R}^{3+1}}\frac{1}{4}F_{\mu\nu}F^{\mu\nu}+\frac{1}{2}D_{\mu}\phi\overline{D^{\mu}\phi}dxdt.
\]
A basic feature of this system is that it is invariant under the following gauge transformation:
\[
 \phi\mapsto e^{i\chi}\phi; \quad A\mapsto A-d\chi.
\]
More precisely, if $(A, \phi)$ solves \eqref{EQMKG}, then $(A-d\chi, e^{i\chi}\phi)$ is also a solution for any potential function $\chi$. Note that $U(1)$ is abelian. The Maxwell field $F$ is invariant under the above gauge transformation and \eqref{EQMKG} is said to be an \textsl{abelian gauge theory}. For the more general theory when $U(1)$ is replaced by a general compact Lie group, the corresponding equations are referred to as \textsl{Yang-Mills-Higgs equations}.

\bigskip

In this paper, we consider the Cauchy problem to \eqref{EQMKG}. The initial data set $(E, H, \phi_0, \phi_1)$ consists of the initial electric field $E$, the magnetic field $H$, together with initial data $(\phi_0, \phi_1)$ for the scalar field. In terms of the solution $(F, \phi)$, on the initial hypersurface, these are:
\begin{equation*}
% \label{IDset}
F_{0i}=E_i,\quad \leftidx{^*}F_{0i}=H_i,\quad \phi(0, x)=\phi_0,\quad D_t\phi(0, x)=\phi_1,
\end{equation*}
where $\leftidx{^*}F$ is the Hodge dual of the 2-form $F$. In local coordinates $(t, x)$,
\[
(H_1, H_2, H_3)=(F_{23}, F_{31}, F_{12}).
\]
 The data set is said to be \textsl{admissible} if it satisfies the compatibility condition
\begin{equation*}
div(E)=\Im(\phi_0\cdot \overline{\phi_1}),\quad div (H)=0,
\end{equation*}
where the divergence is taken on the initial hypersurface $\mathbb{R}^3$. For solutions of \eqref{EQMKG}, the energy
\[
 E[F, \phi](t):=\int_{\mathbb{R}^3}|E|^2+|H|^2+|D\phi|^2dx
\]
is conserved. Another important conserved quantity is the total charge
\begin{equation}
\label{defcharge}
q_0=\frac{1}{4\pi}\int_{\mathbb{R}^3}\Im(\phi\cdot \overline{D_t\phi})dx=\frac{1}{4\pi}\int_{\mathbb{R}^3}div (E)dx,
\end{equation}
which can be defined at any fixed time $t$. The existence of nonzero charge plays a crucial role in the asymptotic behavior of solutions of \eqref{EQMKG}. It makes the analysis more complicated and subtle. This is obvious from the above definition as the electric field $E_i=F_{0i}$ has a tail $q_0r^{-3}x_i$ at any fixed time $t$.

\bigskip

The Cauchy problem to \eqref{EQMKG} has been studied extensively. One of the most remarkable results is due to Eardley-Moncrief in \cite{Moncrief1}, \cite{Moncrief2}, in which it was shown that there is always a global solution to the general Yang-Mills-Higgs equations for sufficiently smooth initial data.
%me, also see \cite{velotem}, \cite{veloL}, \cite{segal}.
 This result has later been greatly improved for MKG equations by Klainerman-Machedon in their celebrated work \cite{MKGkl}. We state their result here as it guarantees a global solution to study in this paper.
\begin{thm}
\label{thm:KMKG}
 There is a unique (up to gauge transformations) global solution of \eqref{EQMKG} if the initial data are bounded in the energy space, that is, $E[F, \phi](0)$ is finite.
\end{thm}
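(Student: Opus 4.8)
The plan is to fix a gauge that converts \eqref{EQMKG} into a system of nonlinear wave equations, to establish local well-posedness at the energy regularity, and then to globalize using the conserved energy together with a continuation argument. The natural choice at this regularity is the Coulomb gauge $\pa^i A_i = 0$. In this gauge the temporal component $A_0$ no longer evolves but solves the elliptic (Poisson-type) equation $\De A_0 = -J_0$ sourced by the charge density $J_0 = \Im(\phi\cdot\overline{D_0\phi})$, and can be recovered on each time slice by standard elliptic theory. The spatial components then satisfy inhomogeneous wave equations $\Box A_i = \mathbb{P}_i J$, where $\mathbb{P}$ is the Leray projection onto divergence-free vector fields (the gradient piece $\pa_i\pa_t A_0$ being annihilated by $\mathbb{P}$), while $\phi$ satisfies the covariant wave equation, which in this gauge reads $\Box\phi = -2iA^\mu\pa_\mu\phi - i(\pa^\mu A_\mu)\phi + A^\mu A_\mu\phi$.

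First I would set up a contraction scheme for this coupled system in function spaces adapted to the energy norm, controlling $A_i$ and $\phi$ at the level of $H^1\times L^2$ together with suitable spacetime norms. The elliptic part for $A_0$ is treated directly via Poisson's equation; one keeps track of the total charge $q_0$, which generates the long-range tail $q_0 r^{-1}$ in $A_0$ and $q_0 r^{-2}$ in $E$, but this background does not obstruct the local theory. The remaining task is to bound the nonlinearities in the wave equations for $A_i$ and $\phi$ and to show they map the iteration space into itself contractively.

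The hard part will be closing these nonlinear estimates at the energy level: a naive iteration loses a derivative in the worst quadratic interactions, most notably in the term $A^i\pa_i\phi$ and in the current feeding $\Box A_i$. The key is to exploit the null structure hidden by the Coulomb condition. Because $\pa^i A_i = 0$, the spatial connection is purely its ``curl'' part, and the dangerous products can be rewritten as genuine null forms of the type $Q_{ij}(\varphi,\psi) = \pa_i\varphi\,\pa_j\psi - \pa_j\varphi\,\pa_i\psi$. One then invokes the bilinear $L^2$ spacetime estimates for such null forms to recover the derivative that a generic product would lose, which is exactly what lets the iteration close in the energy space. This is the analytic heart of Theorem \ref{thm:KMKG} and the step I expect to be the main obstacle.

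Finally, with local well-posedness in hand, globalization follows: the conserved energy $E[F,\phi](t) = E[F,\phi](0)$ furnishes a uniform a priori bound on the $H^1\times L^2$ norm of the gauge-invariant quantities, so the local solution cannot degenerate in finite time and can be continued for all $t$. Uniqueness is understood modulo the gauge transformations $\phi\mapsto e^{i\chi}\phi$, $A\mapsto A - d\chi$: the gauge-invariant pair $(F,\phi)$ is determined uniquely by the admissible data, and two solutions with the same initial data set differ only by such a transformation. Along the way one must also verify that the Coulomb gauge can be imposed initially and propagated by the flow, which amounts to solving the elliptic gauge condition and checking its consistency with the evolution.
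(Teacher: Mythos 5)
Your proposal is correct and follows essentially the same route as the proof this paper relies on: Theorem \ref{thm:KMKG} is not proved here but quoted from Klainerman--Machedon \cite{MKGkl}, whose argument is precisely what you outline --- Coulomb gauge with the elliptic equation for $A_0$, local well-posedness at energy regularity recovered through the null-form structure $Q_{ij}$ and bilinear estimates, and globalization by energy conservation. The one point to make explicit (which the paper itself emphasizes) is that the local existence time must depend \emph{only} on the energy norm, not on any higher regularity, since that is exactly what allows the conserved energy to drive the continuation argument.
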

Same statement holds for the non-abelian case of Yang-Mills equations, see e.g. \cite{YMkl}, \cite{sungjinYM},
\cite{MKGtesfahun}. Since then there has been an extensive literature on generalizations and extensions of the above classical result, aiming at improving the regularity of the initial data in order to construct a global solution. For more details, we refer to \cite{MKGtao}, \cite{KriegerMKG4}, \cite{MKGtataru}, \cite{MKGmachedon}, \cite{OhMKG4}, \cite{MKGigor} and reference therein. A common feature of all these works is to
 construct a local solution with rough data. Then the global well-posedness follows by establishing a priori bound for some appropriate norms of the solution. In the work of Eardley-Moncrief, based on the conservation law of energy, they showed that the $L^\infty$ norm of the solution never blows up even though it may grow in time $t$. As a consequence the solution can be extended to all time; however the decay property of the solution is unknown. For Theorem \ref{thm:KMKG}, the difficulty is to construct a solution with finite energy data on a short time interval so that the length of the time interval depends only on the energy norm. Then the solution exists globally in time since the energy is conserved. It is not clear how (if it is possible) this construction of global solution is able to give any asymptotics of the solution. In view of this, although the solution of \eqref{EQMKG} exists
globally with rough initial data, very little is known about the decay properties.

\bigskip

On the other hand, asymptotic behavior and decay estimates are only understood for linear fields (see e.g.\cite{asymLkl}) and nonlinear fields with sufficiently small initial data (see e.g.\cite{fieldschrist}, \cite{Shu}). These mentioned results rely on the conformal symmetry of the system, either by conformally compactifying the Minkowski space or by using the conformal killing vector field $(t^2+r^2)\pa_t+2tr\pa_r$ as multiplier. Nevertheless the use of the conformal symmetry requires strong decay of the initial data and thus in general does not allow the presence of nonzero charge except when the initial data are essentially compactly supported. For the case with nonzero charge but still with small initial data, the first related work regarding the asymptotic properties was due to W. Shu in \cite{shu2}. However, that work only considered the case when the solution is trivial outside
a fixed forward light cone. Details for general case were not carried out. A complete proof towards this program was later contributed by Lindblad-Sterbenz in \cite{LindbladMKG}, also see a more recent work \cite{MKGLydia}.

 \bigskip

 The presence of nonzero charge has a long range effect on the asymptotic behavior of the solutions, at least in a neighbourhood of the spatial infinity. This can be seen from the conservation law
of the total charge as the electric field $E$ decays at most $ r^{-2}$ as $r\rightarrow\infty$ at any fixed time. This weak decay rate makes the analysis more complicated even for small initial data. To deal with this difficulty, Lindblad-Sterbenz constructed a global chargeless field and made use of the fractional Morawetz estimates obtained by using $u^p\pa_u+v^p\pa_v$ as multipliers. The latter work \cite{MKGLydia} relied on the observation that the angular derivative of the Maxwell field has zero charge. The Maxwell field then can be estimated by using Poincar\'e inequality. However, the smallness
of the initial data, in particular the smallness of the charges, still allows one to use perturbation method to obtain the decay estimates for the solutions.

\bigskip

From the above discussion, we see that on one hand the solution of \eqref{EQMKG} exists globally with data merely in energy space and on the other hand, the asymptotic behavior is only clear for small data. It is then natural to ask whether the global nonlinear charged-scalar-fields (solutions of MKG) exhibit any form of decay properties.
The aim of the present paper is to affirmatively answer this question. We show strong quantitative decay estimates for solutions of \eqref{EQMKG} with data merely bounded in some weighted energy space.

\bigskip

To be more precise, assume that the initial data $(E, H, \phi_0, \phi_1)$ belong to the weighted energy space with weights $(1+r)^{1+\ga_0}$ for some positive constant $\ga_0$. In particular, the energy is finite. According to Theorem \ref{thm:KMKG}, there is a global solution $(F, \phi)$ of \eqref{EQMKG}. Let $\tilde{F}$ be the chargeless field\footnote{ Compared to the construction in \cite{LindbladMKG}, we avoid the use of a smooth cut off function. The reason is that we will carry out estimates respectively in the exterior region $\{t+R<r\}$ and interior region $\{t+R>r\}$, where the field $\tilde{F}$ is smooth.}:
\[
\tilde{F}=F-q_0 r^{-2}\chi_{\{r\geq t+R\}}dt\wedge dr,\quad R>1,
\]
where $q_0$ is the total charge and $\chi_{K}$ is the characteristic function on the set $K$. We can show that the energy flux through the outgoing light cone $\Si_{\tau}$ decays pointwise in terms of $\tau$:
\[
 E[\tilde{F}, \phi](\Si_{\tau})\leq C(1+|\tau|)^{-1-\ga}
 \]
for all $0<\ga<\ga_0$ and some constant $C$ depending on $\ga_0$, $\ga$ and the initial weighted energy. Moreover, we can establish an integrated local energy decay and a class of $r$-weighted energy decay. These decay estimates are sufficiently strong to capture the asymptotic properties of solutions of \eqref{EQMKG}. In fact, the above energy flux decay immediately leads to the pointwise decay for the spherical average of the scalar field. In our future works, these decay estimates are the first crucial step toward the stronger pointwise decay for large nonlinear charged-scalar-fields.

\bigskip

Furthermore, since the data are assumed to be merely bounded in some weighted energy space, the total charge $q_0$ is generic and can be arbitrarily large. Note that $\tilde{F}=F$ inside the forward light cone $\{t+R\geq r\}$. Our result in particular shows that the charge can only affect the asymptotic behavior of the solution outside this fixed light cone. This phenomenon was conjectured
by W. Shu in \cite{Shu} and has been confirmed for sufficiently smooth and small initial data in \cite{MKGLydia}, \cite{LindbladMKG}. We thus give an affirmative answer to the conjecture for all data in the weighted energy space described above. In other words, the quantitative decay estimates together with the precise description of the effect of the charge completely characterize the asymptotic behavior of solutions of \eqref{EQMKG}.

\bigskip

To make the statement of the main result precise, we define some necessary notations.
We use the standard polar local coordinate system $(t, r,
\om)$ of Minkowski space as well as the null coordinates $u=\frac{t-r}{2}$, $v=\frac{t+r}{2}$.
Without loss of generality we only prove estimates in the future, i.e., $t\geq 0$. In our argument, we estimate the decay of the solution with respect
to the foliation $\Si_{\tau}$
\begin{figure}[h]
\centering
\includegraphics[trim=0mm 120mm 0mm 0mm, scale=0.25]{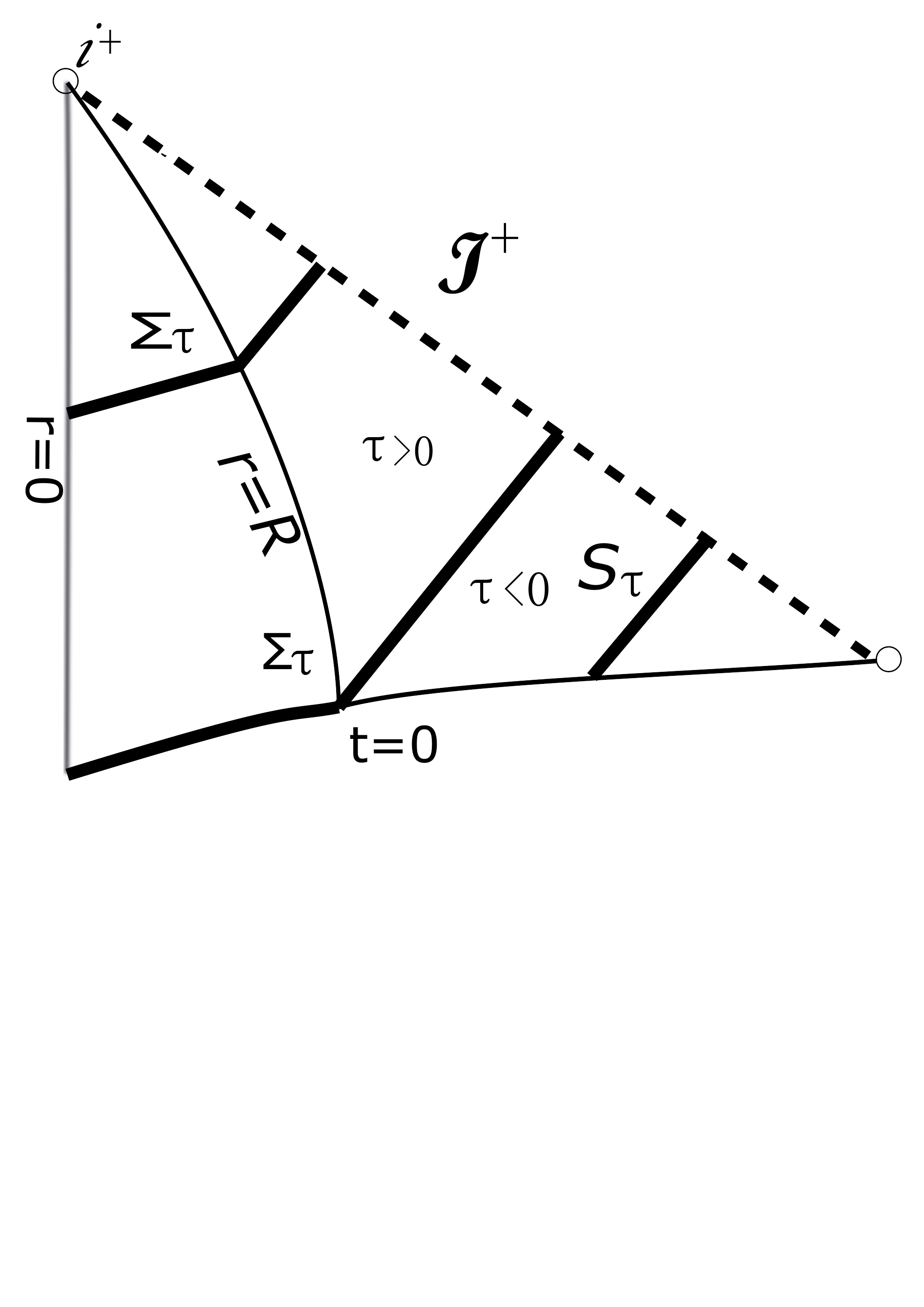}
\end{figure}

\noindent depicted in the above Penrose diagram.
Here $\tau<0$ corresponds to the foliation in the exterior region $\{t+R\leq r\}$ and $\tau\geq 0$ foliates the interior region.
In coordinates the leaves $\Si_{\tau}$ can be defined rigorously as follows:
 \begin{align*}
&S_\tau:=\{u=u_\tau=\f12(\tau-R), v\geq v_\tau=\f12(|\tau|+R)\},\quad \tau\in \mathbb{R};\\
&\Si_\tau:=\{t=\tau, r\leq R\}\cap\{t\geq 0\}\cup S_\tau,\quad \tau\in\mathbb{R}
\end{align*}
for some constant $R>1$.

 Next we introduce a null frame $\{L, \Lb, e_1, e_2\}$, where
\[
L=\pa_v=\pa_t+\pa_r,\quad \Lb=\pa_u=\pa_t-\pa_r
\]
and $\{e_1, e_2\}$ is an orthonormal basis of the sphere with
constant radius $r$. We use $\D$ to denote the covariant derivative associated to the connection field $A$ on the sphere with radius $r$. For any 2-form $F$, denote the null decomposition under the above null frame by
\begin{equation}
\a_i=F_{Le_i},\quad\underline{\a}_i=F_{\Lb e_i},\quad \rho=\f12 F_{\Lb L}, \quad \si=F_{e_1 e_2},\quad i\in{1, 2}.
\end{equation}
Let $E[\phi, F](\Si)$ be the energy flux of the 2-form $F$ and the complex scalar field $\phi$ through the hypersurface $\Si$ in Minkowski space. For the outgoing null hypersurface $S_{\tau}$, one can compute that
\begin{align*}
 %E[\phi, F](\mathbb{R}^3)&=\int_{\mathbb{R}^3}|F|^2+|D\phi|^2dx,\quad |F|^2=\rho^2+|\si|^2+\frac{1}{2}(|\a|^2+|\underline{\a}|^2),\\
 E[\phi, F](S_\tau)&=\int_{S_\tau}(|D_L\phi|^2+|\D\phi|^2+\rho^2+\si^2+|\a|^2) r^2dvd\om.
\end{align*}
 For the admissible initial data set $(E, H, \phi_0, \phi_1)$ for \eqref{EQMKG}, let $q_0$ be the total charge. Define the chargeless electric field $\tilde{E}$ on the initial hypersurface:
\[
\tilde{E}=E-q_0 r^{-2}\chi_{\{R\leq r\}} \om.
\]
The constant $R>1$ was used to define the foliation $\Si_{\tau}$. Our assumption on the initial data is that for some positive constant $0<\ga_0\leq 1$ the following weighted energy
\begin{equation}
\label{IDMKG}
E_0= \int_{\mathbb{R}^3}(1+r)^{1+\ga_0}(|D\phi_0|^2+|\phi_1|^2+|\tilde{E}|^2+|H|^2)dx
\end{equation}
is finite. We now can state our main theorem:
\begin{thm}
 \label{energydecayMKG}
Consider the Cauchy problem to \eqref{EQMKG}. Assume the initial data set $(E, H, \phi_0, \phi_1)$ with charge $q_0$ is admissible
and the weighted energy $E_0$ is finite for some positive constant $\ga_0\leq 1$. Then for all $\ep>0$, $0\leq \ga<\ga_0$, $R>1$ the global solution $(F, \phi)$ obeys the following decay estimates:
\begin{itemize}
\item[1.] The energy flux decay and the integrated local energy decay:
\[
 E[\phi, \tilde{F}](\Si_{\tau})+\int_{\frac{s}{\tau}\geq 1}\int_{\Si_s }\frac{|\tilde{F}|^2+|D\phi|^2}{(1+r)^{1+\ep}}dxds\leq CE_0(1+|\tau|)^{-1-\ga},\quad \forall\tau\in\mathbb{R};
\]
\item[2.] A hierarchy of $r$-weighted energy estimates for all $0\leq p\leq 1+\ga$:
\begin{align*}
 &\int_{\frac{s}{\tau}\geq 1}\int_{S_s }r^{p-1}(|D_{L}(r\phi)|^2+|\D(r\phi)|^2+|r\tilde{\rho}|^2+|r\a|^2+|r\si|^2)dvd\om ds\\
 +&\int_{S_{\tau}}r^{p}(|D_{L}(r\phi)|^2+|r\a|^2)dvd\om \leq CE_0(1+|\tau|)^{p-1-\ga},\quad \forall\tau\in\mathbb{R};
\end{align*}
for some constant $C$ depending only on $\ga$, $\ga_0$, $\ep$, $R$, $p$ and the charge $q_0$. Here $\tilde{F}$ is the chargeless field $\tilde{F}=F-q_0 r^{-2}\chi_{\{r\geq t+R\}}dt\wedge dr$.
\end{itemize}
\end{thm}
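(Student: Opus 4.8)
\medn
The plan is to run the vector-field (multiplier) method for the coupled system, organized around the foliation $\Si_\tau$ and the chargeless field $\tilde F$. The basic object is the combined energy-momentum tensor
\[
T_{\mu\nu}[\tilde F,\phi]=\tilde F_{\mu\a}\tilde F_\nu{}^\a-\tfrac14 m_{\mu\nu}\tilde F_{\a\b}\tilde F^{\a\b}+\Re\big(D_\mu\phi\,\overline{D_\nu\phi}\big)-\tfrac12 m_{\mu\nu}D^\a\phi\,\overline{D_\a\phi}.
\]
On a genuine solution the full tensor $T[F,\phi]$ is divergence free: the current $J_\mu$ generated by $\Box_A\phi=0$ exactly cancels the Lorentz force coming from $\pa^\nu F_{\mu\nu}=J_\mu$, which is precisely why $E[F,\phi]$ is conserved. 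Replacing $F$ by $\tilde F$ disturbs this only through the static Coulomb piece $q_0 r^{-2}dt\wedge dr$, a vacuum Maxwell field away from the origin; I would first record the inhomogeneous equation satisfied by $\tilde F$ and the resulting error $\nab^\mu T_{\mu\nu}[\tilde F,\phi]$, whose terms are products of the Coulomb field with $\phi$ and $D\phi$ and are supported in $\{r\geq t+R\}$, where (per the footnote) $\tilde F$ is smooth. All integrations are then performed separately in $\{r>t+R\}$ and $\{r<t+R\}$, so that no derivative ever falls on the characteristic cutoff.

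For Part 1, I would apply the divergence theorem to the current $T_{\mu\nu}[\tilde F,\phi]X^\nu$ in the slab bounded by $\Si_{\tau'}$ and $\Si_\tau$. Taking $X=\pa_t$ produces the nonnegative flux $E[\phi,\tilde F](\Si_\tau)$; taking a Morawetz multiplier $X=f(r)\pa_r$ with a zeroth-order correction (to kill the spurious lower-order term) produces, via the Morawetz positivity of the resulting bulk $T_{\mu\nu}\piX^{\mu\nu}$, the integrated local energy decay $\int\int (1+r)^{-1-\ep}(|\tilde F|^2+|D\phi|^2)$, up to the Coulomb errors. The engine of the whole scheme is the $r$-weighted hierarchy of Part 2. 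Setting $\psi=r\phi$ and using that in the wave zone $\Box_A$ reduces to $D_L D_{\Lb}(r\phi)$ plus angular and curvature terms, I would contract the scalar equation with the multiplier $r^p\,\overline{D_L(r\phi)}$, and apply the analogous weighted multipliers to the null components $(\a,\tilde\ro,\si)$ of $\tilde F$ arising from the null decomposition of the Maxwell equations. Integrating over the region between $S_{\tau'}$ and $S_\tau$ yields, for each $0\leq p\leq 1+\ga$, the boundary flux $\int_{S_\tau} r^p(|D_L(r\phi)|^2+|r\a|^2)$ together with the positive bulk integral $\int\int r^{p-1}(\cdots)$ appearing in the statement.

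The decay rates then follow from the interplay of the three estimates. The top estimate $p=1+\ga$ is controlled by the initial weighted energy $E_0$, which serves as the master quantity, while the integrated local energy decay and the $p=0$ flux form the base of the hierarchy. I would then run the Dafermos--Rodnianski pigeonhole argument over a dyadic sequence of times: finiteness of the $p$-bulk integral forces decay of the $(p-1)$-flux on a subsequence, and descending from $p=1+\ga$ to $p=0$ converts the $(1+\ga)$ units of available $r$-weight into the flux decay rate $E[\phi,\tilde F](\Si_\tau)\les E_0(1+|\tau|)^{-1-\ga}$. Reinserting this improved flux decay into the hierarchy upgrades the intermediate $r^p$-energies to $(1+|\tau|)^{p-1-\ga}$, giving Part 2. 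In the exterior leaves $\tau<0$ the same mechanism propagates the weighted data inward and produces the decay in $|\tau|$.

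The main obstacle, and the single place where large charge enters, is to close all of this with no smallness assumption. The coupling between $F$ and $\phi$ itself is benign, being absorbed by the exact cancellation above; the genuine difficulty is threefold. First, the Coulomb error terms in $\nab^\mu T[\tilde F,\phi]$ and the curvature-times-field terms arising from $[D_\mu,D_\nu]\phi=\sqrt{-1}F_{\mu\nu}\phi$ in the weighted multiplier identities cannot be treated perturbatively when $q_0$ is large; I expect to absorb them using the integrated local energy decay together with Hardy and Poincar\'e inequalities on the spheres $S_\tau$, exploiting that the radial component $\tilde\ro$ has been rendered chargeless, hence has zero spherical mean and is controllable by its angular derivatives. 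Second, the connection $A$ inherits a long-range tail from the charge, so neither $\pa\phi$ nor $A$ can be estimated individually; the entire argument must be phrased in the gauge-invariant quantities $D\phi$ and $\tilde F$, for which the multiplier identities are insensitive to the tail. Third, since the global solution of Theorem \ref{thm:KMKG} is only known to carry finite (a priori possibly growing) energy, the quantitative decay must be extracted by a continuity argument in $\tau$ in which the nonlinear bulk errors are shown to be borderline-integrable and absorbable into the left-hand sides, with constants independent of the size of $q_0$; verifying this last point is the crux of the whole theorem.
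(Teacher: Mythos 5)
Your scaffolding coincides with the paper's framework (the stress tensor for $(\tilde F,\phi)$, the multipliers $\pa_t$, $f(r)\pa_r$, $r^pL$, and a dyadic pigeonhole in the interior), but the single step that makes the theorem nontrivial --- closing the exterior $r$-weighted estimate in the presence of a \emph{large} charge --- is missing, and the two substitutes you propose would not work. In the $r^pL$ identity the charge produces the error $q_0\iint_{\mathcal{D}_{r_1,r_2}} r^{p-2}J_L\,dxdt$ with $J_L=r^{-2}\Im(\overline{D_L\psi}\cdot\psi)$, $\psi=r\phi$; after Cauchy--Schwarz this leaves $q_0^2\iint r^{p-3}|\psi|^2$, which scales \emph{exactly} like the positive bulk term $\iint r^{p-1}|D_L\psi|^2$ (Hardy), so for large $q_0$ it can neither be absorbed nor summed: a naive Gronwall in the radial parameter diverges logarithmically. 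Your Poincar\'e-on-spheres mechanism targets the wrong object: the dangerous term involves the scalar field $\psi$, which has no reason to have vanishing spherical mean, not the chargeless component $\tilde\rho$; the zero-mean/Poincar\'e idea is precisely the device of \cite{MKGLydia}, which the paper points out works only perturbatively for small data. Your third point is also off target: the theorem's constants are \emph{allowed} to depend on $q_0$ (only smallness is forbidden), and the paper uses no continuity or bootstrap argument in $\tau$ at all.

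What the paper actually does is to deliberately work at $p=1+\ga<1+\ga_0$ and spend the gap $\ep_0=\f12(\ga_0-\ga)$. A fundamental-theorem-of-calculus estimate along outgoing cones bounds $\int_\om|\psi|^2$ by its value on the initial hypersurface plus $r_1^{-\ga}$ times the $r^{1+\ga}$-weighted flux; feeding this into the identity gives a Gronwall inequality with the non-integrable kernel $s^{-1}$, which is then defused in two stages: (i) for $r_2\leq r_1^{*}=r_1^{1+\ep_0/C_1}$ an integrating-factor argument (multiply by $r_1^{C_1-1}$ and integrate) tolerates the $s^{-1}$ kernel because the resulting loss $(r_2/r_1)^{C_1}$ is bounded on these scales while the data supply the extra $r_1^{-2\ep_0}$, yielding flux decay $\les E_0 r_1^{-\ep_0}$; (ii) for $r_2>r_1^*$, choosing the intermediate point $v_1=u+r_1^*$ in the cone estimate converts the kernel into the integrable $s^{-1-\ep_0/C_1}$, and Gronwall closes without growth. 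This is exactly why the theorem loses decay ($\ga<\ga_0$ strictly), a feature your proposal never engages with. You also miss the structural reason the interior is clean: the $r$-weighted flux through $S_0$ involves only $D_L\psi$ and $\a$ (not $\rho$), and $\bar F$ has finite energy flux through $S_0$, so inside $\{r\leq t+R\}$ one works with the full $F$ and the exactly divergence-free tensor --- there are no charge error terms there, and the Dafermos--Rodnianski pigeonhole/interpolation runs verbatim; in the exterior the decay in $|\tau|\sim r_1$ comes directly from the weighted smallness of the data on $B_{r_1,\infty}$, not from a pigeonhole.
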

We make several remarks:
\begin{remark}
The weighted energy space defined in line \eqref{IDMKG} with $\ga_0=0$ is scale invariant. Thus our assumptions on the initial data can be almost scale invariant. We also note that the charge part
 $q_0 r^{-2}$ does not belong to any such weighted energy space with $\ga_0\geq 0$. This in particular implies that the existence of nonzero charge plays an important role in the asymptotics of
the solution at least in the neighborhood of the spatial infinity.
\end{remark}

\begin{remark}
Our approach in this paper can be adapted to obtain similar decay estimates for solutions of \eqref{EQMKG} in higher dimensions as long as the solution exists globally. The problem in higher dimensions is that
global regularity in energy space is critical in $4+1$-dimension and super-critical in higher dimensions. The global regularity in energy space is known in the critical case with small
energy \cite{MKGtataru}. Recently the smallness assumption has been removed by Oh-Tataru \cite{OhMKG4} and  Krieger-Luhrmann\cite{KriegerMKG4}.
\end{remark}

\begin{remark}
 We can relax the assumptions on the components of the initial data $D_{\Lb}\phi$, $\underline{\a}$ to be merely
bounded in the energy space instead of belonging to the weighted energy space. This is because the decay estimates in the interior region rely only on the energy flux through $\Si_0$
and a $r$-weighted energy flux (see definition in Section \ref{defpwe}) through the forward light cone $\{t=r+R\}$. The latter one is independent of the components $D_{\Lb}\phi$, $\underline{\a}$.
\end{remark}

\begin{remark}
The regularity in the exterior region is propagated. If initially the components $D_{\Lb}\phi$, $\underline{\a}$ are merely bounded in the energy space, then we can only obtain the
boundedness of the energy in the exterior region instead of quantitative decay as in the main theorem. If the initial weighted energy $E_0$ defined in line \eqref{IDMKG} is finite for some $\ga_0>1$, then we can show that the
integrated local energy and the energy flux through $\Si_{\tau}$ decay with the maximal rates $(1+\tau)^{-2}$ in the interior region $\{t+R\geq r\}$ while in the exterior region the decay estimates are the same
as in the main theorem with all $\ga<\ga_0$. This propagation of regularity in the exterior region has been discussed in \cite{MKGLydia}, \cite{LindbladMKG} for the small data case.
\end{remark}

\begin{remark}
 For the non-abelian case of Yang-Mills equations, we can obtain the similar decay estimates when the charges are vanishing. For the general case, the definitions of charges that we are aware of are not gauge
invariant. In particular, we may need to work under a fixed gauge condition if we expect similar decay estimates as in the main theorem. We will address this issue in our future works.
\end{remark}

 As far as we know, Theorem \ref{energydecayMKG} is the first result that gives the strong decay estimates for the global large solutions of \eqref{EQMKG}. The quantitative decay estimates rely only the weighted energy norm of the initial data. Based on our experience on nonlinear wave equations, these decay estimates, more precisely, the energy flux decay through $\Si_{\tau}$, the integrated local energy estimate together with the hierarchy of $r$-weighted energy estimates, are sufficiently strong to  derive the pointwise decay of the solutions if we also have the decay estimates for the derivatives of the solutions, see e.g. \cite{yang1}, \cite{yang3}. However, the MKG equation is nonlinear. Commuting with derivatives will introduce nonlinear terms. The quantitative decay estimates in Theorem \ref{energydecayMKG} are  fundamental to control those nonlinear terms without assuming any smallness. This will be addressed in our future works.

\bigskip

As we have explained previously, the conformal compactification method may not be able to derive the decay estimates in Theorem \ref{energydecayMKG} due to the existence of nonzero charge and the weak decay of the initial data. The perturbation method works only for small data case, at least some form of smallness is necessary, see e.g. \cite{yang5} in which the incoming energy flux was assumed to be small. In this paper, we use a new approach to study the large data problem to \eqref{EQMKG}. This new approach was originally introduced by Dafermos-Rodnianski in \cite{newapp} for the study of decay of linear waves on black hole spacetimes and has been successfully applied to several linear and nonlinear problems, e.g. \cite{yannisER}, \cite{newapp3}, \cite{gustavUSchwarz}, \cite{volkerSch}, \cite{qiansharpLocal}, \cite{yang2}. However these applications were carried out in a linear setting as either the problem itself is linear or the data are small so that perturbation method was applied. Theorem \ref{energydecayMKG} is the first result showing that this robust new approach is also very useful for some nonlinear large data problems.

\bigskip

In the abstract framework proposed by Dafermos-Rodnianski in \cite{newapp}, the new method for proving the decay estimates as in Theorem \ref{energydecayMKG} relies on three kinds of basic ingredients and estimates: a uniform energy bound, an integrated local energy decay estimate and a hierarchy of $r$-weighted energy estimates in a neighbourhood of the null infinity. The uniform energy bound usually follows from the energy conservation law or it can be derived by using the vector field $\pa_t$ as multiplier. The integrated local energy decay estimate has been well studied. This type of estimates was first proven by C. Morawetz in \cite{mora1}, \cite{mora2}. In the past decades, the integrated local energy decay estimate has been obtained for variant linear waves on various kinds of backgrounds, see e.g. \cite{bluemaxkerr}, \cite{dr3}, \cite{tatarumaxile}, \cite{lodecTataru} and references therein. The method may vary for different problems. For solutions of \eqref{EQMKG}, we can use the vector field $f(r)\pa_r$ as multiplier to derive the integrated local energy decay estimate.

\bigskip

The very important new ingredient of the new method is the $r$-weighted energy estimates in a neighborhood of the null infinity. This kind of estimates can be obtained by using the vector fields $r^p(\pa_t+\pa_r)$ as multipliers for all $0\leq p\leq 2$. Combining these robust new estimates with the integrated local energy decay estimates discussed above, a pigeon hold argument then leads to the quantitative decay of the energy flux through $\Si_{\tau}$. In other words, to prove the decay estimates as in Theorem \ref{energydecayMKG}, these three kinds of estimates have to be considered together.

\bigskip

The MKG equations are nonlinear while the new method discussed above is under a linear setting. One of the key observations that allows us to use the new method to obtain the decay estimates in Theorem \ref{energydecayMKG} is that the total energy momentum tensor (see definitions in Section \ref{notation}) for the full solution $(F, \phi)$ of \eqref{EQMKG} is divergence free. Based on this fact, we can use the vector fields $\pa_t$, $f(r)\pa_r$, $r^p(\pa_t+\pa_r)$ as multipliers to derive those three kinds of basic estimates for the new approach.

\bigskip

However, due to the presence of nonzero charge, we are not able to get useful $r$-weighted energy estimates for any $p\geq 1$ in the exterior region. This is because the charge part $q_0 r^{-2}$ does not belong to any weighted energy space with weights $r^{p}$, $p\geq 1$. On the other hand, it seems that to close those basic estimates, at lease some $r$-weighted energy estimate with $p>1$ is necessary. To overcome this difficulty caused by the nonzero charge, we define the charge 2-form $\bar F$
\[
 \bar F=q_0 r^{-2}\chi_{\{t+R\leq r\}}dt\wedge dr
\]
in the exterior region, which has the same charge $q_0$ as the full solution $F$. By our assumption, the chargeless field $F-\bar F$ is bounded in the weighted energy space with weights $r^{1+\ga_0}$ initially. We thus can apply the vector field method to the energy momentum tensor for the chargeless part of the solution $(F-\bar F, \phi)$ instead of the full solution $(F, \phi)$. However the energy momentum tensor for $(F-\bar F, \phi)$ is not divergence free. It will introduce the following error term:
\[
 q_0 \Im(\overline{D\phi}\cdot\phi)
\]
arising from the interaction between the charge and the scalar field $\phi$. Since the charge $q_0$ is large, this error term can not be absorbed in general. The key to control this error term is to make use
of the better decay of the scalar field $\phi$ initially and then use a type of Gronwall's inequality, for details see Section \ref{secex}. This is the reason that we loss a little bit of decay rate ($\ga<\ga_0$) in the main theorem.

\bigskip

Once we have the decay estimates in the exterior region, that is, the estimates in Theorem \ref{energydecayMKG} for $\tau<0$, we in particular can conclude that the energy flux and the $r$-weighted energy flux through the boundary $\Si_{0}$ of the interior region is finite. The key point that
the charge does not affect the asymptotic behavior of the solution in the interior region is that the $r$-weighted energy flux through the boundary $S_0$ depends only on $F-\bar F$. However the energy flux through $S_{0}$ does rely on the full solution $F$ but the charge part $\bar F$ has finite energy flux through $S_0$. Therefore the energy flux and the $r$-weighted energy flux through $\Si_0$ of the full solution $(F, \phi)$ are finite. This enables us to use the new approach to derive the decay estimates of Theorem \ref{energydecayMKG} in the interior region.

\bigskip

The paper is organized as follows: we derive the energy identities obtained by using the vector fields $f(r)\pa_r$, $\pa_t$, $r^p\pa_v$ as multipliers in Section \ref{notation}; we then use these identities to derive
decay estimates for the chargeless part of the solution $(F-\bar F, \phi)$ in the exterior region as well as the energy flux through the boundary $S_0$ in Section \ref{secex}; once we have the energy flux
bound through $\Si_0$ and the $r$-weighted energy flux bound through $S_0$, we can obtain the decay estimates for the solution in the interior region and conclude the main theorem in the last Section.

\textbf{Acknowledgments}
The author would like to thank Mihalis Dafermos and Pin Yu for helpful discussions. He is indebted to Igor Rodnianski for plenty of invaluable comments and suggestions.

\section{Preliminaries and energy identities}

\label{notation}

We define some additional notations used in the sequel. In the exterior region $\{r\geq R+t\}$, for $R\leq r_1< r_2$,
we use $S_{r_1, r_2}$ to denote the following outgoing null hypersurface emanating from the sphere with radius $r_1$
\[
 S_{r_1, r_2}:=\{u=-\frac{r_1}{2},\quad r_1\leq r\leq r_2\}.
\]
We note that $S_{r_1, r_2}$ is part of the outgoing null hypersurface $S_{\frac{R-r_1}{2}}$ defined before the main theorem. We use $\bar C_{r_1, r_2}$ to denote the following incoming null hypersurface emanating from the sphere with radius $r_2$
\[
 \bar C_{r_1, r_2}:=\{v=\frac{r_2}{2},\quad r_1\leq r\leq r_2\}.
\]
For $0\leq \tau_1<\tau_2$, $\bar C_{\tau_1, \tau_2}$ will be used to denote the null infinity between $\Si_{\tau_1}$ and $\Si_{\tau_2}$. Throughout this paper, $\tau\geq0$ will be the parameter
in the interior region $\{r\leq R+t\}$ and $R\leq r_1\leq r_2$ will be used as the parameters in the exterior region. On the initial hypersurface $\mathbb{R}^3$, the annulus with radii $0\leq r_1<r_2$ is denoted as
\[
 B_{r_1, r_2}:=\{t=0,\quad r_1\leq r\leq r_2\}.
\]
We use $\mathcal{D}_{r_1, r_2}$ to denote the region bounded by $S_{r_1, r_2}$, $B_{r_1, r_2}$, $\bar C_{r_1, r_2}$ and
$\mathcal{D}_{\tau_1, \tau_2}$ to denote the region bounded by $\Si_{\tau_1}$, $\Si_{\tau_2}$ for $0\leq \tau_1<\tau_2$. For the complex scalar field $\phi$ and the 2-form $F$, we denote
\[
\bar D\phi:=(D\phi, (1+r)^{-1}\phi),\quad |F|^2:=\rho^2+\si^2+\f12(|\a|^2+|\underline{\a}|^2).
\]
We now review the energy method for the MKG equations. Denote $d\vol$ the
volume form in the Minkowski space. In the local coordinate system $(t, x)$, we have
$
d\vol=dxdt.
$
Here we have chosen $t$ to be the time orientation. For any two forms $\tilde{F}$ satisfying the Bianchi identity \eqref{bianchi} and any complex scalar field $\phi$, we define the associated energy momentum tensor
\begin{equation*}
\begin{split}
 T[\phi, \tilde{F}]_{\a\b}&=\tilde{F}_{\a\mu}\tilde{F}_\b^{\;\mu}-\frac{1}{4}m_{\a\b}\tilde{F}_{\mu\nu}\tilde{F}^{\mu\nu}+\Re\left(\overline{D_\a\phi}D_\b\phi\right)-\f12 m_{\a\b}\overline{D^\mu\phi}D_\mu\phi.
\end{split}
\end{equation*}
Given a vector field $X$, we have the following identity
\[
\pa^\mu(T[\phi,\tilde{F}]_{\mu\nu}X^\nu) = \Re(\Box_A \phi X^\nu \overline{D_\nu\phi})+X^\nu F_{\nu\a}J^\a+\pa^\mu \tilde{F}_{\mu\a}\tilde{F}_{\nu}^{\;\a}X^{\nu}+T[\phi,\tilde{F}]^{\mu\nu}\pi^X_{\mu\nu},
\]
where $\pi_{\mu\nu}^X=\f12 \mathcal{L}_X m_{\mu\nu}$ is the deformation tensor of the vector field $X$ in Minkowski space and $J_\mu=\Im(\phi\cdot \overline{D_\mu\phi})$. Through out this paper, we raise and lower indices with respect to this flat metric $m_{\mu\nu}$.
Take any function $\chi$. We have the following equality
\begin{align*}
 \f12\pa^{\mu}\left(\chi \pa_\mu|\phi|^2-\pa_\mu\chi|\phi|^2\right)= \chi \overline{D_\mu\phi}D^\mu\phi -\f12\Box\chi\cdot|\phi|^2+\chi \Re(\Box_A\phi\cdot \overline\phi).
\end{align*}
We now define the vector field $\tilde{J}^X[\phi, \tilde{F}]$ with components
\begin{equation}
\label{mcurent} \tilde{J}^X_\mu[\phi,\tilde{F}]=T[\phi,\tilde{F}]_{\mu\nu}X^\nu -
\f12\pa_{\mu}\chi \cdot|\phi|^2 + \f12 \chi\pa_{\mu}|\phi|^2+Y_\mu
\end{equation}
for some vector field $Y$ which may also depend on the scalar field $\phi$. We then have the equality
\begin{align*}
\pa^\mu \tilde{J}^X_\mu[\phi,\tilde{F}] =&\Re(\Box_A \phi(\overline{D_X\phi}+\chi\overline \phi))+div(Y)+X^\nu F_{\nu\mu}J^\mu+\pa^\mu \tilde{F}_{\mu\ga}\tilde{F}_{\nu}^{\;\ga}X^{\nu}
\\&+T[\phi,\tilde{F}]^{\mu\nu}\pi^X_{\mu\nu}+
\chi \overline{D_\mu\phi}D^\mu\phi -\f12\Box\chi\cdot|\phi|^2.
\end{align*}
Here the operator $\Box$ is the wave operator in Minkowski space and the divergence of the vector field $Y$ is also taken in the Minkowski space.

\bigskip

Now take any region $\mathcal{D}$ in $\mathbb{R}^{3+1}$. Assume on this region the scalar field $\phi$ and the 2-form $\tilde{F}$ satisfies the following equation
\[
 \pa^\nu \tilde{F}_{\mu\nu}=J_{\mu},\quad \Box_A\phi=0.
\]
Here we note that the covariant operator $\Box_A$ is associated to the solution $F$ of \eqref{EQMKG}. Then using Stokes' formula, the above calculation leads to the following energy identity
\begin{align}
\notag &\iint_{\mathcal{D}} div(Y)+X^\nu (F_{\nu\mu}-\tilde{F}_{\nu\mu})J^\mu+T[\phi,\tilde{F}]^{\mu\nu}\pi^X_{\mu\nu}+
\chi \overline{D_\mu\phi}D^\mu\phi -\f12\Box\chi\cdot|\phi|^2d\vol\\
&=\iint_{\mathcal{D}}\pa^\mu \tilde{J}^X_\mu[\phi,\tilde{F}]d\vol=\int_{\pa \mathcal{D}}i_{\tilde{J}^X[\phi,\tilde{F}]}d\vol,
\label{energyeq}
\end{align}
where $\pa\mathcal{D}$ denotes the boundary of the domain $\mathcal{D}$ and $i_Z d\vol$ denotes the contraction of the volume form $d\vol$
with the vector field $Z$ which gives the surface measure of the
boundary. For example, for any basis $\{e_1, e_2,
\ldots, e_n\}$, we have $i_{e_1}( de_1\wedge de_2\wedge\ldots
de_k)=de_2\wedge de_3\wedge\ldots\wedge de_k$.

\bigskip

In this paper, the domain $\mathcal{D}$ will be regular regions bounded by the $t$-constant slices, the outgoing null hypersurfaces $u=constant$ or the incoming null
hypersurfaces $v=constant$. We now compute $i_{\tilde{J}^{X}[\phi,\tilde F]}d\vol$ on these three kinds of hypersurfaces. On
$t=constant$ slice, the surface measure is a function times $dx$. Recall the volume form
\[
d\vol=dxdt=-dtdx.
\]
Here note that $dx$ is a $3$-form. We thus can show that
\begin{equation}
\label{curlessR}
\begin{split}
 i_{\tilde{J}^{X}[\phi,\tilde{F}]}d\vol
 =&-(\Re(\overline{D^t\phi}
D_X\phi)-\f12 X^0\overline{D^\ga\phi}D_\ga\phi-\f12 \pa^t\chi \cdot
|\phi|^2+\f12\chi\pa^t|\phi|^2+Y^0\\
&+\tilde{F}^{0\mu}\tilde{F}_{\nu\mu}X^\nu-\frac{1}{4}X^0\tilde{F}_{\mu\nu}\tilde{F}^{\mu\nu})dx.
\end{split}
\end{equation}
On the outgoing null hypersurface $\{u=constant\}$, we can write the volume form
\[
d\vol=dxdt=r^2drdt d\om=2r^2dvdud\om=-2dudvd\om.
\]
Here $u=\frac{t-r}{2}$, $v=\frac{t+r}{2}$ are the null coordinates and $d\om$ is the standard surface measure on the unit sphere.
Notice that $\Lb=\pa_u$. We can compute
\begin{equation}
\label{curStau}
\begin{split}
i_{\tilde{J}^{X}[\phi, \tilde F]}d\vol=&-2(\Re(\overline{D^{\Lb}\phi}
D_X\phi)-\f12 X^{\Lb}\overline{D^\ga\phi}D_\ga\phi-\f12
\pa^{\Lb}\chi|\phi|^2+\f12\chi \pa^{\Lb}|\phi|^2+Y^{\Lb}\\
&+\tilde{F}^{\Lb\mu}\tilde{F}_{\nu\mu}X^\nu-\frac{1}{4}X^{\Lb}\tilde{F}_{\mu\nu}\tilde{F}^{\mu\nu})r^2dvd\om.
\end{split}
\end{equation}
Similarly, on the $v$-constant incoming null hypersurfaces $\{v=constant\}$, we
have
\begin{equation}
\label{curnullinfy}
\begin{split}
  i_{\tilde{J}^{X}[\phi,\tilde{F}]}d\vol=&2(\Re(\overline{D^{L}\phi}
D_X\phi)-\f12 X^{L}\overline{D^\ga\phi}D_\ga\phi-\f12
\pa^{L}\chi|\phi|^2+\f12\chi \pa^{L}|\phi|^2+Y^{L}\\
&+\tilde{F}^{L\mu}\tilde{F}_{\nu\mu}X^\nu-\frac{1}{4}X^L\tilde{F}_{\mu\nu}\tilde{F}^{\mu\nu})r^2dud\om.
\end{split}
\end{equation}
We remark here that the above formulae hold for any vector fields $X$, $Y$ and any function $\chi$.

\subsection{The $r$-weighted energy estimates using the multiplier $r^pL$}
\label{defpwe}
In this section, we establish the $r$-weighted energy identities for solutions of the MKG equations. We obtain the $r$-weighted energy estimate either in the exterior region $\{r\geq R+t\}$ for the domain $\mathcal{D}_{r_1, r_2}$ for $R\leq r_1\leq r_2$ which is bounded by the outgoing null
hypersurface $S_{\frac{R-r_1}{2}}$ emanating from the sphere with radius $r_1$,
the incoming null hypersurface emanating from the sphere with radius $r_2$ and the initial hypersurface $\mathbb{R}^3$ or in the interior region for domain $\tilde{\mathcal{D}}_{\tau_1, \tau_2}$ for
$0\leq \tau_1<\tau_2$ which is bounded by the outgoing null hypersurfaces $S_{\tau_1}$, $S_{\tau_2}$ and the cylinder $\{r=R\}$. Recall here that $R>1$ is a constant determined by the initial data and is
used to define the foliation. In the energy identity \eqref{energyeq}, we choose the vector fields $X$, $Y$ and the function $\chi$ as follows:
\[
 X=r^{p}L, \quad Y=\frac{p}{2}r^{p-2}|\phi|^2L,\quad  \chi=r^{p-1}.
\]
Note that we have the equality
\begin{align*}
 r^2|D_L\phi|^2=|D_L\psi|^2-L(r|\phi|^2)&,\quad r^2|\D\phi|^2=|\D\psi|^2,\\
r^2|D_{\Lb}\phi|^2=|D_{\Lb}\psi|^2+\Lb(r|\phi|^2)&,\quad \psi=r\phi.
\end{align*}
Throughout this paper, we always use $\psi$ to denote the weighted scalar field $r\phi$.
We then can compute
\begin{align*}
 & div(Y)+T[\phi, \tilde{F}]^{\mu\nu}\pi_{\mu\nu}^X+\chi \overline{D^{\mu}\phi}D_\mu\phi-\f12\Box\chi |\phi|^2\\
&=\frac{p}{2}r^{-2}L(r^{p}|\phi|^2)+\f12r^{p-1}\left(p(|D_L\phi|^2+|\tilde\a|^2)+(2-p)(|\D\phi|^2+\tilde{\rho}^2+\tilde\si^2)\right)\\
&\quad -\f12 p(p-1)r^{p-3}|\phi|^2+X^\nu (F_{\nu\a}-\tilde{F}_{\nu\a})J^\a\\
&=\f12 r^{p-3}\left(p(|D_L\psi|^2+|\tilde\a|^2)+(2-p)(|\D\psi|^2+\tilde{\rho}^2+\tilde\si^2)\right)+r^p (F_{L\mu}-\tilde{F}_{L\mu})J^\mu.
\end{align*}
We next compute the boundary terms according to the formulae \eqref{curlessR} to \eqref{curnullinfy}. We have
\begin{align*}
 \int_{S_{r_1, r_2}}i_{\tilde{J}^X[\phi, \tilde{F}]}d\vol&=\int_{S_{r_1, r_2}}r^{p}(|D_L\psi|^2+r^2|\tilde\a|^2)-\f12 L(r^{p+1}\phi) \quad dvd\om,\\
\int_{\bar C_{r_1, r_2}}i_{\tilde{J}^X[\phi, \tilde{F}]}d\vol&=-\int_{\bar C_{r_1, r_2}}r^{p}(|\D\psi|^2+r^2|\tilde{\rho}|^2+r^2|\tilde\si|^2)+\f12\Lb(r^{p+1}|\phi|^2) dud\om,\\
\int_{B_{r_1, r_2}}i_{\tilde{J}^X[\phi,\tilde{F}]}d\vol&=\f12\int_{B_{r_1, r_2}}r^p(|D_L\psi|^2+|\D\psi|^2)+r^{p+2}(|\tilde\a|^2+|\tilde{\rho}|^2+\tilde\si^2)\\
&\quad\quad-\pa_r(r^{p+1}|\phi|^2) d\om dr,\\
\int_{r=R, \tau_1\leq t\leq \tau_2 }i_{\tilde{J}^X[\phi, \tilde{F}]}d\vol&=\f12\int_{\tau_1}^{\tau_2}\int_{\om}r^p(|D_L\psi|^2-|\D\psi|^2+r^2(|\tilde \a|^2-|\tilde\rho|^2-|\tilde\si|^2))\\
&\quad\quad-\pa_t(r^{p+1}|\phi|^2) d\om dt.
\end{align*}
The formula on $S_{\tau}$ is the same as that on $S_{r_1, r_2}$. Now notice that on the domain $D_{r_1, r_2}$ in the exterior region we have the identity
\begin{align*}
 &\int_{S_{r_1, r_2}}L(r^{p+1}|\phi|^2)dvd\om-\int_{\bar C_{r_1, r_2}}\Lb(r^{p+1}|\phi|^2)dud\om-\int_{B_{r_1, r_2}}\pa_r(r^{p+1}|\phi|^2)d\om dr=0
\end{align*}
and on the domain $\tilde{D}_{\tau_1, \tau_2}$ in the interior region we have
\begin{align*}
 &-\int_{S_{\tau_1}}L(r^{p+1}|\phi|^2)dvd\om-\int_{\bar C_{\tau_1, \tau_2}}\Lb(r^{p+1}|\phi|^2)dud\om\\
&+\int_{S_{\tau_2}}L(r^{p+1}|\phi|^2)dvd\om+\int_{\tau_1}^{\tau_2}\int_{\om}\pa_t(r^{p+1}|\phi|^2)d\om dt=0.
\end{align*}
The above calculations then lead to the following $r$-weighted energy identity in the exterior region
\begin{align}
\notag
  &\iint_{\mathcal{D}_{r_1, r_2}}r^{p-1}\left(p(|D_L\psi|^2+r^2|\tilde\a|^2)+(2-p)(|\D\psi|^2+r^2|\tilde{\rho}|^2+r^2\tilde\si^2)\right)dvd\om du\\
\label{pWescaout}
&+\int_{S_{r_1,r_2}}r^p(|D_L\psi|^2+r^2|\tilde\a|^2)dvd\om+\int_{\bar C_{r_1, r_2}}r^p(|\D\psi|^2+r^2|\tilde{\rho}|^2+r^2\tilde\si^2)dud\om\\
\notag
=&\f12\int_{B_{r_1,r_2}}r^p(|D_L\psi|^2+|\D\psi|^2)+r^{p+2}(|\tilde\a|^2+|\tilde{\rho}|^2+\tilde\si^2)drd\om-\iint_{\mathcal{D}_{r_1, r_2}}r^{p}(F_{L\mu}-\tilde{F}_{L\mu})J^\mu dxdt
\end{align}
and the corresponding one in the interior region
\begin{equation}
\label{pWescain}
\begin{split}
&\int_{\tau_1}^{\tau_2}\int_{S_\tau}r^{p-1}\left(p(|D_L\psi|^2+r^2|\tilde\a|^2)+(2-p)(|\D\psi|^2+r^2|\tilde{\rho}|^2+r^2\tilde\si^2)\right)dvd\om du\\
&+\int_{S_{\tau_1}}r^p(|D_L\psi|^2+r^2|\tilde\a|^2)dvd\om+\int_{\bar C_{\tau_1, \tau_2}}r^p(|\D\psi|^2+r^2|\tilde{\rho}|^2+r^2\tilde\si^2)dud\om\\
=&\int_{S_{\tau_1}}r^p|D_L\psi|^2+r^{p+2}|\tilde{\a}|^2dvd\om-\f12\int_{\tau_1}^{\tau_2}\int_{\om}r^p(|D_L\psi|^2-|\D\psi|^2+r^2(|\tilde{\a}|^2-|\tilde\rho|^2-\tilde\si^2)) d\om dt\\
&-\int_{\tau_1}^{\tau_2}\int_{S_\tau}r^{p}(F_{L\mu}-\tilde{F}_{L\mu})J^\mu dxdt.
\end{split}
\end{equation}
Here we recall that
\[
 J_\mu=\Im(\overline{D_\mu\phi}\cdot \phi)=r^{-2}\Im(\overline{D_\mu\psi}\cdot \psi),\quad \psi=r\phi.
\]
In the exterior region, we consider estimates for the chargeless part $\tilde{F}$ while in the interior region, we take $\tilde{F}$ as the full solution $F$. The above $r$-weighted energy identity
explains why the charge can only affect the asymptotic behaviour of the solution in the exterior region. In fact, we see later that the charge only affect the asymptotics for the curvature components
$\rho$. The solution on the interior region only depends on the data on $\Si_0$. More precisely, the energy flux through it and the $r$-weighted energy flux. However, the $r$-weighted energy flux depends only on
the curvature component $\a$ but not $\rho$. This shows that the charge can only affect the asymptotics for the solution on the exterior region. This phenomenon was original conjectured by W. Shu in \cite{Shu}.

\subsection{The integrated local energy estimates using the multiplier $f(r)\pa_r$}
We consider the integrated local energy estimates either on the exterior region for the domain $\mathcal{D}_{r_1, r_2}$, $R\leq r_1\leq r_2$ or on the interior region for the domain
$\mathcal{D}_{\tau_1, \tau_2}$, $0\leq \tau_1<\tau_2$ which is bounded by the hypersurfaces $\Si_{\tau_1}$ and $\Si_{\tau_2}$. In the energy identity \eqref{energyeq}, we choose the vector fields
$X$, $Y$ and the function $\chi$ as follows
\[
 X=f(r)\pa_r, \quad \chi=r^{-1}f, \quad Y=0
\]
for some function $f$ of $r$. We then can compute
\begin{align*}
 &T[\phi, \tilde{F}]^{\mu\nu}\pi^X_{\mu\nu}+ \chi \overline{D_\mu\phi}D^\mu\phi-\f12\Box \chi |\phi|^2\\
=&\f12 f'(|D_t\phi|^2+|D_r\phi|^2+\f12 |\tilde\a|^2+\f12 |\underline{\tilde{\a}}|^2)+(r^{-1}f-\f12 f')(|\D\phi|^2+\tilde\rho^2+\tilde\si^2)-\f12 r^{-1}\pa_r f'|\phi|^2.
\end{align*}
We will construct the function $f$ explicitly later as the choice of the function depends on the region we consider. The basic idea is to choose function $f$
such that $f'$, $r^{-1}f-\f12 f'$, $-\pa_r f'$ are all positive. We now compute the boundary terms according to the formulae \eqref{curlessR} to \eqref{curnullinfy}. We can show that
\begin{align*}
 \int_{S_{r_1, r_2}}i_{\tilde{J}^X[\phi, \tilde{F}]}d\vol&=\f12\int_{S_{r_1, r_2}}f(|D_L\phi|^2-|\D\phi|^2-\tilde{\rho}^2+|\tilde\a|^2-|\tilde\si|^2)-\chi'|\phi|^2\\
&\quad \quad+2\chi \Re(D_L\phi \cdot \overline\phi) \quad r^2dvd\om,\\
\int_{\bar C_{r_1, r_2}}i_{\tilde{J}^X[\phi, \tilde{F}]}d\vol&=\f12\int_{\bar C_{r_1, r_2}}f(|D_{\Lb}\phi|^2-|\D\phi|^2-|\tilde\rho|^2+|\underline{\tilde{\a}}|^2-|\tilde\si|^2)
-\chi'|\phi|^2\\
&\quad \quad-2\chi \Re(D_{\Lb}\phi\cdot \overline{\phi}) \quad r^2dud\om,\\
\int_{B_{r_1, r_2}}i_{\tilde{J}^X[\phi,\tilde{F}]}d\vol&=\int_{B_{r_1, r_2}}f(\Re(\overline{D_t\phi}(D_r\phi+r^{-1}\phi))+\frac{1}{4}f(|\tilde\a|^2-|\tilde{\underline{\a}}|^2) dx.
\end{align*}
Here $\chi=r^{-1}f$. The idea is that we use the energy flux through the corresponding surfaces to bound these boundary terms. In fact, we can compute the energy flux explicitly
\begin{align}
 \notag
 E[\phi, \tilde{F}](S_{r_1, r_2}):=&2\int_{S_{r_1, r_2}}i_{\tilde{J}^{\pa_t}[\phi, \tilde{F}]}d\vol=\int_{S_{r_1, r_2}}(|D_L\phi|^2+|\D\phi|^2+\tilde{\rho}^2+|\tilde\a|^2+|\tilde\si|^2) r^2dvd\om,\\
\notag
E[\phi, \tilde{F}](\bar C_{r_1, r_2}):=&2\int_{\bar C_{r_1, r_2}}i_{\tilde{J}^{\pa_t}[\phi, \tilde{F}]}d\vol=\int_{\bar C_{r_1, r_2}}(|D_{\Lb}\phi|^2+|\D\phi|^2+|\tilde\rho|^2+|\underline{\tilde{\a}}|^2+|\tilde\si|^2) r^2dud\om,\\
\label{enfluxc}
E[\phi, \tilde{F}](B_{r_1, r_2}):=&2\int_{B_{r_1, r_2}}i_{\tilde{J}^{\pa_t}[\phi,\tilde{F}]}d\vol=\int_{B_{r_1, r_2}}|D\phi|^2+|\tilde\rho|^2+|\tilde\si|^2+\f12(|\tilde\a|^2+|\tilde{\underline{\a}}|^2) dx.
\end{align}
Therefore to control the boundary terms for the multiplier $f(r)\pa_r$, it suffices to control the integral of $|\phi|^2$ in terms of the corresponding energy flux.
We will choose the function $f$ to be bounded. Since $\chi=r^{-1}f$, we have $\chi'\sim r^{-2}$. We thus can use a type of Hardy's inequality adapted to the connection $D$ to bound the integral of $\chi'|\phi|^2$.
\begin{lem}
 \label{Hardy}
Assume the complex scalar field $\phi$ vanishes at null infinity, that is,
\[
 \lim_{v\rightarrow\infty}\phi(v, u, \om)=0
\]
for all $u$, $\om$. Then we have
\begin{align*}
 \int_{\Si_{\tau}}\left|\frac{\phi}{1+r}\right|^2d\si\leq  12 E[\phi](\Si_{\tau}),\quad \int_{S_{r_1}}\left|\frac{\phi}{1+r}\right|^2r^2dvd\om\leq 12 E[\phi](S_{r_1}),\quad r_1>R,\quad \tau\geq 0.
\end{align*}
Here $d\si=dx$ when $r\leq R$ and $d\si=r^2dvd\om $ on $S_{\tau}$ and $E[\phi](\Si)$ denotes the energy flux through the surface $\Si$.
\end{lem}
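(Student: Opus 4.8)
The plan is to reduce this covariant weighted inequality to a classical scalar Hardy inequality for the real, nonnegative function $g=|\phi|$, and then to a purely one-dimensional estimate in the area radius $r$. The first reduction is the diamagnetic (Kato) inequality: since $A$ is real, $\pa_\mu|\phi|^2=2\Re(\overline\phi D_\mu\phi)$, hence $|\phi|\,\pa_\mu|\phi|=\Re(\overline\phi D_\mu\phi)$ and therefore $|\pa_\mu g|\le |D_\mu\phi|$ pointwise wherever $\phi\ne0$ (the set $\{\phi=0\}$ is handled by approximating $|\phi|$ by $\sqrt{|\phi|^2+\de^2}$ and letting $\de\to0$). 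In particular the radial derivative of $g$ is always dominated by the corresponding covariant derivative appearing in the energy flux.

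The second reduction is geometric: I would parametrize each hypersurface by $r$ and integrate over the sphere. On $\Si_\tau$ the hypersurface is the union of the flat disk $\{t=\tau,\ r\le R\}$ and the outgoing cone $S_\tau=\{u=u_\tau\}$, which join continuously at $r=R$; along it $r$ runs from $0$ to $\infty$ and the induced measure is $d\si=r^2dr\,d\om$ throughout (on $S_\tau$ the coordinate $u$ is fixed, so $r=v-u$, $dr=dv$, and $r^2dv\,d\om=r^2dr\,d\om$). The outgoing derivative of $g$ along $\Si_\tau$ is $\pa_rg$ on the flat part and $Lg=\pa_vg=\pa_rg$ on the cone, and by the diamagnetic inequality each is controlled by the derivative term ($|D_r\phi|^2$, resp.\ $|D_L\phi|^2$) entering $E[\phi]$. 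Consequently $\int_{\Si_\tau}(\pa_rg)^2r^2dr\,d\om\le E[\phi](\Si_\tau)$, and likewise $\int_{S_{r_1}}(\pa_rg)^2r^2dr\,d\om\le E[\phi](S_{r_1})$. It then remains to prove, for each fixed $\om$, a one-dimensional weighted Hardy inequality on $[a,\infty)$ (with $a=0$ for $\Si_\tau$ and $a=r_1$ for $S_{r_1}$):
\[
\int_a^\infty \frac{g^2}{(1+r)^2}\,r^2\,dr\ \le\ C\int_a^\infty (\pa_rg)^2\,r^2\,dr .
\]

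For this I would first discard the benign weight, $\frac{r^2}{(1+r)^2}\le1$, reducing to $\int_a^\infty g^2\,dr\le C\int_a^\infty(\pa_rg)^2r^2\,dr$. The hypothesis $g(\infty)=0$ lets me write $g(r)=-\int_r^\infty\pa_sg\,ds$, whence by Cauchy--Schwarz
\[
g(r)^2\le \frac1r\int_r^\infty(\pa_sg)^2s^2\,ds ,
\]
so that $r\,g(r)^2\to0$ as $r\to\infty$ by finiteness of the flux. Integrating by parts and applying Cauchy--Schwarz, $\int_a^\infty g^2dr=-a\,g(a)^2-2\int_a^\infty g\,\pa_rg\,r\,dr\le 2\big(\int_a^\infty g^2dr\big)^{1/2}\big(\int_a^\infty(\pa_rg)^2r^2dr\big)^{1/2}$; absorbing the first factor gives $\int_a^\infty g^2dr\le4\int_a^\infty(\pa_rg)^2r^2dr$. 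Integrating over $\om$ and combining with the two reductions yields the claim with a constant no larger than the stated $12$ (the sharp constant is $4$; the slack comfortably absorbs the coefficients lost in the diamagnetic step and in passing from $|D\phi|^2$ to its radial part).

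The main obstacle is the vanishing of the boundary term at null infinity: the pointwise hypothesis $\lim_{v\to\infty}\phi=0$ alone is not quite the statement $r\,g^2\to0$ that the integration by parts needs, so it must be upgraded using finiteness of the energy flux through the Cauchy--Schwarz bound above. A secondary technical point is the diamagnetic reduction on the zero set of $\phi$ and the matching of the outgoing derivative across the corner $r=R$, where $\Si_\tau$ changes from spacelike to null; both are routine but should be recorded, since the inequality is applied on the broken hypersurface $\Si_\tau$ rather than on a smooth one.
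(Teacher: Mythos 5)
Your proposal is correct and follows essentially the same route as the paper: the diamagnetic inequality $|\pa_\mu|\phi||\le|D_\mu\phi|$ is exactly the paper's observation that $D$ is compatible with the Hermitian inner product (equivalently, its gauge-invariance/real-gauge reduction), after which the estimate becomes the classical one-dimensional Hardy inequality along each ray of the broken hypersurface, which the paper outsources to the cited references and you prove directly, including the correct upgrade of the pointwise vanishing at null infinity to $r\,g(r)^2\to 0$ via Cauchy--Schwarz and the finite flux. The only detail worth recording is that the absorption step should be performed on finite intervals $[a,M]$ with $M\to\infty$ (since $\int_a^\infty g^2\,dr$ is not known to be finite a priori), a routine modification.
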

\begin{proof}
It suffices to notice that the covariant derivative $D$ is compatible with the inner product $<,>$ on the complex plane. Then the proof goes the same as the case when the connection field $A$ is trivial,
see e.g. \cite{dr3}, \cite{yang1}. Another quick way to reduce the proof of the Lemma to the case with trivial connection field $A$ is to choose a particular gauge such that the scalar field
$\phi$ is real. We can do this is due to the fact that all the quantities we considered here are gauge invariant.
\end{proof}
The above lemma implies that the boundary terms arising from the multiplier $f(r)\pa_r$ can be controlled by the corresponding energy flux up to a constant. We now choose $f$ explicitly to establish the
integrated local energy estimates. On the interior region $\mathcal{D}_{\tau_1, \tau_2}$ we choose
$$f=2\ep^{-1}-\frac{2\ep^{-1}}{(1+r)^{\ep}},\quad \chi=r^{-1}f$$
and on the exterior region $\mathcal{D}_{r_1, r_2}$ we take
\[
 f=2\ep^{-1}(r_1^{-\ep}-(1+r)^{-\ep}),\quad r_1\geq R>1
\]
for some small positive constant $\ep<\frac{1}{4}$. In any case, $f$ is bounded and we have
\begin{align*}
 \f12 f'=\frac{1}{(1+r)^{1+\ep}},\quad -\f12 \Box \chi&=\frac{1+\ep}{r(1+r)^{2+\ep}},\quad \chi- \f12
f'\geq \frac{2\ep^{-1}}{r} - \frac{1+2\ep^{-1}}{r(1+r)^\ep}\geq \frac{1}{r}.
\end{align*}
The last inequality holds for $r>1$.

We still need to estimate the error term $X^\nu(F_{\nu\mu}-\tilde{F}_{\nu\mu})J^{\mu}$. Recall that
\[
 J^\mu=\Im(\phi\cdot\overline{D^\mu\phi}).
\]
Using Cauchy-Schwarz's inequality, we have
\[
 2|X^\nu(F_{\nu\mu}-\tilde{F}_{\nu\mu})J^{\mu}|\leq f(\ep_1^{-1}(1+r)^{1+\ep}|F_{r\mu}-\tilde{F}_{r\mu}|^2|\phi|^2+\ep_1(1+r)^{-1-\ep}|D_\mu\phi|^2),\quad \forall \ep_1>0.
\]
The idea to control this error term is that we choose $\ep_1$ sufficiently small so that the second term on the right hand side can be absorbed.

\bigskip

To avoid too many constants, in the rest of this paper, we make a convention that $A\les B$ means $A\leq C B$ for some constant $C$ depends only on
the constants $\ep$, $R$, the charge $q_0$ and $\ga$, $\ga_0$ in the main Theorem \ref{energydecayMKG}.

\bigskip

Based on the above calculations, we can derive the following integrated local energy estimates in the
interior region for the domain $\mathcal{D}_{\tau_1, \tau_2}$, $0\leq \tau_1<\tau_2$
\begin{align}
\label{ILEMKGin}
 &\int_{\tau_1}^{\tau_2}\int_{\Si_{\tau}}\frac{|\bar D\phi|^2+|\tilde{F}|^2}{(1+r)^{1+\ep}}+\frac{|\D\phi|^2+\tilde\rho^2+\tilde\si^2}{1+r}dxdt\\
\notag
\les & E[\phi, \tilde{F}](\Si_{\tau_1})+E[\phi, \tilde F](\Si_{\tau_2})
+E[\phi, \tilde{F}](\bar C_{\tau_1, \tau_2})\\
\notag
&\quad+\int_{\tau_1}^{\tau_2}\int_{\Si_{\tau}}(1+r)^{1+\ep}|F_{r\nu}-\tilde{F}_{r\nu}|^2|\phi|^2dxdt
\end{align}
and the integrated local energy estimates in the exterior region for the domain $\mathcal{D}_{r_1, r_2}$, $R\leq r_1<r_2$
\begin{align}
\label{ILEMKGout}
 &r_1^\ep\iint_{\mathcal{D}_{r_1, r_2}}\frac{|\bar D\phi|^2+|\tilde{F}|^2}{(1+r)^{1+\ep}}+\frac{|\D\phi|^2+\tilde\rho^2+\tilde\si^2}{1+r}dxdt\\
\notag
\les & E[\phi, \tilde{F}](B_{r_1, \infty})+E[\phi, \tilde F](S_{r_1, r_2})
+E[\phi, \tilde{F}](\bar C_{r_1, r_2})\\
\notag
&\quad+r_1^{-2\ep}\iint_{\mathcal{D}_{r_1, r_2}}(1+r)^{1+\ep}|F_{r\nu}-\tilde{F}_{r\nu}|^2|\phi|^2dxdt.
\end{align}
For the estimate in the exterior region, we gain the decay $r_1^\ep$ is due to the fact that the function $f$ has upper bound $2\ep^{-1}r_1^{-\ep}$ while in the interior region $|f|\les 1$. In addition, since
we need to use the Hardy's inequality to control the integral of $|\phi|^2$, we rely on the fact that initially $\phi$ vanishes at the spatial infinity. This is the reason that we use
$E[\phi, \tilde{F}](B_{r_1, \infty})$ instead of $E[\phi, \tilde{F}](B_{r_1, r_2})$ in the estimate.

\subsection{Energy estimates using the multiplier $\pa_t$}
Our new approach is based on the observation that all the energy should go through the null infinity. Hence the energy flux through the hypersurface $\Si_{\tau}$ which is far away from the light cone has to decay to
zero. The first step is to show that, by using the classical energy method, the energy flux is monotonic.

\bigskip

In the energy identity \eqref{energyeq}, take $X=\pa_t$, $Y=0$, $\chi=0$. Since $\pa_t$ is killing, based on the calculations \eqref{enfluxc} in the previous section, we obtain the energy estimate
in the interior region
\begin{equation}
 \label{esin}
E[\phi, \tilde{F}](\Si_{\tau_2})+E[\phi, \tilde{F}](\bar C_{\tau_1, \tau_2})\leq E[\phi, \tilde{F}](\tau_1)+2\int_{\tau_1}^{\tau_2}\int_{\Si_{\tau}}|F_{0\mu}-\tilde{F}_{0\mu}||J^\mu| d\vol
\end{equation}
and the energy estimate on the exterior region
\begin{equation}
 \label{esout}
E[\phi, \tilde{F}](S_{r_1, r_2})+E[\phi, \tilde{F}](\bar C_{r_1, r_2})\leq E[\phi, \tilde{F}](B_{r_1, r_2})+2\iint_{\mathcal{D}_{r_1, r_2}}|F_{0\mu}-\tilde{F}_{0\mu}||J^\mu| d\vol
\end{equation}
for all $0\leq \tau_1<\tau_2$ and $R\leq r_1<r_2$.

\bigskip

When coupled to the integrated local energy estimates derived in the previous section, we can estimate the last terms containing $J^\mu$ in the above energy estimates by using Cauchy Schwarz's inequality so that
on the right hand side of the inequality there is only integral of $|\phi|^2$ as in the integrated local energy estimates \eqref{ILEMKGin}, \eqref{ILEMKGout}.

\section{Energy estimates in the exterior region}
\label{secex}
Due to the presence of non-zero charge $q_0$, the component of the curvature $\rho$ has a tail $q_0 r^{-2}$. In general, a useful integrated local energy decay estimate for the
full solution $(F, \phi)$ of \eqref{EQMKG} at least in the exterior region is not expected as $\rho$ dost not decay in time $t$. A natural way to circumvent this problem is to remove the
charge part of the field and consider estimates for the remained part which is chargeless. As in the work of Lindblad-Sterbenz in \cite{LindbladMKG}, we define the
charged 2-form on the exterior region $\{r\geq R+t\}$
\begin{equation*}
\bar F=q_0 d(r^{-1}dt)=q_0 r^{-2}dt\wedge dr.
\end{equation*}
The corresponding null decomposition under the null frame $\{L, \Lb, e_1, e_2\}$ is as follows:
\begin{equation}
 \label{charF}
\bar \rho=q_0 r^{-2},\quad \bar \a=\underline{\bar \a}=0,\quad \bar \si=0.
\end{equation}
It can be checked that this charged 2-form $\bar F$ satisfies the linear Maxwell equation
\[
 \pa^\mu \bar F_{\mu\nu}=0
\]
in the exterior region $r\geq R+t$. Here we recall that the constant $R$ is a fixed constant determined by the initial data and is used to define the foliation $\Si_{\tau}$. For solution $(F, \phi)$
of \eqref{EQMKG}, we then define the chargeless 2-form
\begin{equation*}
 \tilde{F}=F-\bar F.
\end{equation*}
In this section, we do the estimates for the chargeless part $\tilde{F}$ instead of the full solution $F$.
\begin{remark}
We remark here that since we
do the estimates separately on the exterior region $\{r\geq R+t\}$ and the interior region $\{r\leq R+t\}$, we do not have to take a smooth cut off function as in \cite{LindbladMKG}.
\end{remark}

Notice that in the integrated local energy estimates \eqref{ILEMKGout}
\[
 (1+r)^{1+\ep}|F_{r\nu}-\tilde{F}_{r\nu}|^2=(1+r)^{1+\ep}|\bar F_{r\nu}|^2\sim q_0^2 r^{-3+\ep}.
\]
The decay rate is not sufficient to make that term be absorbed. Moreover, since the charge $q_0$ is large, we are even lack of the smallness needed. This means that we need to use other estimates in order to
control the error term arising from the charge part. This forces us to consider the $r$-weighted energy estimate on the exterior region $\{r\geq R+t\}$ first.

\bigskip

Take $\tilde{F}=F-\bar F$ in the $r$-weighted energy
estimate \eqref{pWescaout} in the exterior region for the charged 2-form $\bar F$ defined above. According to the relation \eqref{charF}, we obtain the following $r$-weighted energy estimate for the chargeless
part of the solution
\begin{equation}
\label{pWeMKGout}
\begin{split}
  &\iint_{\mathcal{D}_{r_1, r_2}}r^{p-1}\left(p(|D_L\psi|^2+r^2|\a|^2)+(2-p)(|\D\psi|^2+r^2|\tilde{\rho}|^2+r^2\si^2)\right)dvd\om du\\
&+\int_{S_{r_1,r_2}}r^p(|D_L\psi|^2+r^2|\a|^2)dvd\om+\int_{\bar C_{r_1, r_2}}r^p(|\D\psi|^2+r^2|\tilde{\rho}|^2+r^2\si^2)dud\om\\
=&\f12\int_{B_{r_1,r_2}}r^p(|D_L\psi|^2+|\D\psi|^2)+r^{p+2}(|\a|^2+|\tilde{\rho}|^2+\si^2)drd\om-q_0\iint_{\mathcal{D}_{r_1, r_2}}r^{p-2}J_L dxdt.
\end{split}
\end{equation}
Here we recall that
\[
 J_L=\Im(\overline{D_L\phi}\cdot \phi)=r^{-2}\Im(\overline{D_L\psi}\cdot \psi),\quad \psi=r\phi.
\]
The integral on the initial hypersurface $B_{r_1, r_2}$ is finite for all $p\leq 1+\ga_0$ by the assumption \eqref{IDMKG}. We will get a useful $r$-weighted energy inequality once we can control
the error term arising from the charge part which involves estimates for the above $J_L$ depending only on the scalar field. In general $r^{-1}\psi$ has the same size as $D_L\psi$. So if the charge $q_0$
is sufficiently small, independent of the initial data, we then can absorb the error term which has no positive sign in the above inequality \eqref{pWescaout}. However, in our setting, the charge $q_0$
is arbitrarily large. Then a possible approach to control this term is to use Gronwall's inequality. The problem is that as having explained the error term has the same decay rate with those terms on the left
hand side of the above energy equality. In other words, we will get a logarithmic growth of the error term instead of boundedness when using Gronwall's inequality.

\bigskip

In order to overcome this potential logarithmic growth, we make use of the better decay of the initial data. More precisely, we are not going to pursue a $r$-weighted energy estimates with the largest possible $p$
value which is $1+\ga_0$ by the initial condition. We instead consider the $r$-weighted energy estimates with the greatest $p$ which is slightly less than $1+\ga_0$. Let
\[
 \ep_0=\f12(\ga_0-\ga),
\]
where $\ga<\ga_0$ is the positive constant in the main Theorem \ref{energydecayMKG}. First using Cauchy-Schwarz's inequality, we can bound
\begin{equation}
\label{csjl}
 |q_0\iint_{\mathcal{D}_{r_1, r_2}}r^{p-2}J_L dxdt|\leq \frac{p}{2}\iint_{\mathcal{D}_{r_1, r_2}}r^{p-1}|D_L\psi|^2dudvd\om+\frac{8q_0^2}{p}\iint_{\mathcal{D}_{r_1, r_2}}r^{p-3}|\psi|^2dudvd\om.
\end{equation}
The first term on the right hand side can be absorbed in the previous estimate \eqref{pWescaout}. To estimate the weighted integral of $|\psi|^2$, we first note
\begin{equation}
\label{eqfact}
 \pa_L|\psi|\leq |D_L\psi|.
\end{equation}
This is because these are gauge invariant. We can simply assume $\psi$ is real. Then on the fixed outgoing null hypersurface $S_{\tau}$, we can show that
\begin{equation}
 \label{Sobpsi}
\begin{split}
\int_{\om}|\psi|^2(u, v, \om)d\om&\les \int_{\om}|\psi|^2(u, v_1, \om)d\om+\int_{\om}(\int_{v_1}^v|D_L\psi|dv)^2d\om\\
&\les\int_{\om}|\psi|^2(u, v_1, \om)d\om+r_1^{-\ga}\int_{v_1}^v\int_{\om}r^{1+\ga}|D_L\psi|^2dvd\om \cdot
\end{split}
\end{equation}
Here $r=\frac{v-u}{2}$, $\tau=\frac{R-r}{2}$ and $v_1<v$. The implicit constant depends only on $\ga$. For $p\leq 1+\ga$, multiply the above inequality by $r^{p-3}$ and then
integrate it from $v_1$ to $v$ on $S_\tau$. We obtain the following estimate:
\begin{equation}
 \label{SovpsiI}
\begin{split}
&\int_{v_1}^v\int_{\om}r^{p-3}|\psi|^2(u, v, \om)d\om dv\\
&\les r_1^{p-2}\int_{\om}|\psi|^2(u, v_1, \om)d\om+r_1^{-\ga+p-2}\int_{v_1}^v\int_{\om}r^{1+\ga}|D_L\psi|^2dvd\om \cdot
\end{split}
\end{equation}
The improved estimate for the integral of $|\psi|^2$ comes from the first term on the right hand side of the above estimate as we can choose $v_1=-u$ which means that the point $(u, v_1, \om)$ is on
the initial hypersurface. Denote
\begin{equation*}
 E_{r_1, r_2}^p=\f12\int_{B_{r_1,r_2}}r^p(|D_L\psi|^2+|\D\psi|^2+r^{-2}|\psi|^2)+r^{p+2}(|\a|^2+|\tilde{\rho}|^2+\si^2)drd\om.
\end{equation*}
Our assumptions on the initial data implies that $E_{r_1, r_2}^{1+\ga_0}$ is finite. Let $p=1+\ga$ and $v_1=-u$ in \eqref{SovpsiI}. From the energy identity \eqref{pWescaout}, the Cauchy-Schwarz's inequality
\eqref{csjl} and the above Sobolev embedding \eqref{SovpsiI}, we can conclude that
\begin{equation}
\label{psigron0}
 \int_{S_{r_1, r_2}}r^{1+\ga}|D_L\psi|^2dvd\om\leq C_1 E_{r_1, r_2}^{1+\ga}+C_1\int_{r_1}^{r_2}s^{-1}\int_{S_{s, r_2}}r^{1+\ga}|D_L\psi|^2dvd\om ds
\end{equation}
for some constant $C_1$ depending only on $\ga$ and the charge $q_0$. Without loss of generality, we can assume $C_1>1$.
Since $s^{-1}$ is not integrable, Gronwall's inequality will lead to a logarithmic growth. We fix $r_2$ and let
$r_1$ be the variable. Denote
\[
 G(r_1)=\int_{r_1}^{r_2}s^{-1}\int_{S_{s, r_2}}r^{1+\ga}|D_L\psi|^2dvd\om ds.
\]
Then we have
\[
 C_1 E_{r_1, r_2}^{1+\ga}+C_1 G(r_1)+r_1 G'(r_1)\geq 0, \quad G(r_2)=0,\quad G'(r_2)=0,\quad r_1\leq r_2.
\]
Multiply the above inequality by $r_1^{C_1-1}$ and then integrate it from $r_1$ to $r_2$. We obtain
\begin{equation}
\label{psigron1}
 G(r_1)\leq C_1 r_1^{-C_1}\int_{r_1}^{r_2}s^{C_1-1}E_{s, r_2}^\ga ds\leq r_1^{-C_1} E_{r_1, r_2}^{C_1+1+\ga}\leq r_1^{-C_1}r_2^{C_1}E_{r_1, r_2}^{1+\ga}\leq E_0 r_1^{-C_1-2\ep_0}r_2^{C_1}.
\end{equation}
Here recall that $E_0=E_{R,\infty}^{1+\ga_0}$ and $2\ep_0=\ga_0-\ga$. Then from the previous two estimates, we conclude that
\begin{equation}
 \label{lowerscal}
\int_{S_{r_1, r_2}}r^{1+\ga}|D_L\psi|^2dvd\om\leq 2E_0 r_1^{-C_1-2\ep_0}r_1^{C_1+\ep_0}=2E_0 r_1^{-\ep_0},\quad \forall r_1\leq r_2\leq r_1^*= r_1^{1+\frac{\ep_0}{C_1}}.
\end{equation}
Here by our definition, $C_1$ is a constant depending only on the charge $q_0$ and $\ga$. The above estimate implies that due to the better decay of the initial data, we can improve the $r$-weighted energy
estimate with the largest $p$ value for smaller $r_2$. For those points when $r_2$ is large, we rely on the negative $r$ weights in \eqref{Sobpsi}.

\bigskip

 Fix $u$ and consider the outgoing null hypersurface $S_{r_1, r_2}$ for
$r_2>r_1^*=r_1^{1+\frac{\ep_0}{C_1}}$. For $v\geq u+r_1^*$, let $v_1=u+r_1^*$ in the Sobolev embedding \eqref{Sobpsi}. We then can estimate that
\begin{align*}
 \int_{S_{r_1, r_2}}r^{\ga-2}|\psi|^2dvd\om&\les \int_{S_{r_1, r_1*}}r^{\ga-2} |\psi|^2dvd\om+(r_1^*)^{\ga-1}\int_{\om}|\psi|^2(u, -u, \om)dvd\om\\
&+(r_1^*)^{\ga-1}r_1^{-\ga}\int_{S_{r_1, r_1^*}}r^{1+\ga}|D_L\psi|^2dvd\om+(r_1^*)^{-1}\int_{S_{r_1, r_2}}r^{1+\ga}|D_L\psi|^2dvd\om\\
&\les r_1^{\ga-1}\int_{\om}|\psi|^2(u, -u, \om)dvd\om+r_1^{-1}\int_{S_{r_1, r_1^*}}r^{1+\ga}|D_L\psi|^2dvd\om\\
&\quad+r_1^{-1-\frac{\ep_0}{C_1}}\int_{S_{r_1, r_2}}r^{1+\ga}|D_L\psi|^2dvd\om\\
&\les r_1^{\ga-1}\int_{\om}|\psi|^2(u, -u, \om)dvd\om+E_0r_1^{-1-\ep_0}\\
&\quad+r_1^{-1-\frac{\ep_0}{C_1}}\int_{S_{r_1, r_2}}r^{1+\ga}|D_L\psi|^2dvd\om.
\end{align*}
Here note that $r_1^*\geq r_1$ and $\ga\leq 1$. In the last line of the previous estimate, the first term can be bounded by the assumption on the initial data. The second term is integrable in terms of $r_1$. The
improved decay rate in the last term allows us to use Gronwall's inequality. We comment here that the above estimate holds for all $r_2\geq r_1$ as when $r_2\leq r_1^*$ the integral on the
left hand side can be controlled by the first two terms on the right hand side. Now back to the $r$-weighted energy identity \eqref{pWescaout}, for $r_2\geq r_1*$ and $p=1+\ga$, the above calculations imply that
\begin{align*}
 \int_{S_{r_1, r_2}}r^{1+\ga}|D_L\psi|^2dvd\om\leq C_2 E_0 r_1^{-\ep_0}+C_2 \int_{r_1}^{r_2}s^{-1-\frac{\ep_0}{C_1}}\int_{S_{s, r_2}}r^{1+\ga}|D_L\psi|^2dvd\om ds
\end{align*}
for a constant $C_2$ depending only on the charge $q_0$ and $\ga$. The above estimate holds for all $R\leq r_1\leq r_2$. Gronwall's inequality then implies that
\begin{equation}
 \label{pWe1ga}
\int_{S_{r_1, r_2}}r^{1+\ga}|D_L\psi|^2dvd\om+\int_{r_1}^{r_2}s^{-1-\frac{\ep_0}{C_1}}\int_{S_{s, r_2}}r^{1+\ga}|D_L\psi|^2dvd\om ds\leq C_3 E_0 r_1^{-\ep_0}
\end{equation}
for some constant $C_3$ depending only on $q_0$, $\ga$. This estimate is sufficient to bound the error term arising from the charge. From \eqref{SovpsiI}, we have
\begin{align*}
 |q_0\iint_{\mathcal{D}_{r_1, r_2}}r^{p-2}J_L dxdt|&\leq \frac{p}{2}\iint_{\mathcal{D}_{r_1, r_2}}r^{p-1}|D_L\psi|^2dudvd\om+\frac{8q_0^2}{p}\iint_{\mathcal{D}_{r_1, r_2}}r^{p-3}|\psi|^2dudvd\om\\
&\leq \frac{p}{2}\iint_{\mathcal{D}_{r_1, r_2}}r^{p-1}|D_L\psi|^2dudvd\om+C_4 E_{r_1, r_2}^{p}+C_4 E_0 r_1^{-\ga+p-1-\ep_0}\\
& \leq\frac{p}{2}\iint_{\mathcal{D}_{r_1, r_2}}r^{p-1}|D_L\psi|^2dudvd\om+2C_4 E_0 r_1^{-\ga_0+p-1},\quad p\leq 1+\ga
\end{align*}
for some constant $C_4$ depending only on $q_0$, $\ga$. Then from the $r$-weighted energy identity \eqref{pWescaout}, we derive the final $r$-weighted energy estimate we need:
\begin{equation}
\label{pWeMKGout1}
\begin{split}
&  \iint_{\mathcal{D}_{r_1, r_2}}r^{p-1}(|D_L\psi|^2+|\D\psi|^2)+r^{p+1}(|\a|^2+|\tilde{\rho}|^2+\si^2)dvd\om du\\
&+\int_{S_{r_1,r_2}}r^p(|D_L\psi|^2+r^2|\a|^2)dvd\om+\iint_{\mathcal{D}_{r_1, r_2}}r^{p-3}|\psi|^2dudvd\om
\les E_0 r_1^{p-1-\ga_0}
\end{split}
\end{equation}
for all $R\leq r_1\leq r_2$, $0\leq p\leq 1+\ga<1+\ga_0$. Here based on our notation, the implicit constant depends only on $q_0$, $\ga$, $\ga_0$. The estimate for the integral of $|\psi|^2$ is
derived from the previous estimate.

\bigskip

Once we have the $r$-weighted energy estimate as well as the control of the integral of $|\phi|^2$, we can improve the integrated local energy estimates and the energy estimates. Take $\tilde{F}=F-\bar F$
in the integrated local energy estimate \eqref{ILEMKGout} and the energy estimate \eqref{esout}. We have
\[
 |\tilde{F}-F|=|\bar F|\leq |q_0|r^{-2}.
\]
Take $p=\ep$ in the $r$-weighted energy estimate \eqref{pWeMKGout1}. We get
\begin{equation*}
\begin{split}
\iint_{\mathcal{D}_{r_1, r_2}}(1+r)^{1+\ep}|F_{r\nu}-\tilde{F}_{r\nu}|^2|\phi|^2dxdt&\les \iint_{\mathcal{D}_{r_1, r_2}}(1+r)^{\ep-3}|\psi|^2dudvd\om\\
&\les E_0 r_1^{\ep-1-\ga_0}.
\end{split}
\end{equation*}
Here recall that $\psi=r\phi$. Then from the integrated local energy estimate \eqref{ILEMKGout} and the energy estimate \eqref{esout}, we can show that
\begin{align*}
 &r_1^\ep\iint_{\mathcal{D}_{r_1, r_2}}\frac{|\bar D\phi|^2+|\tilde{F}|^2}{(1+r)^{1+\ep}}+\frac{|\D\phi|^2+\tilde\rho^2+\si^2}{1+r}dxdt\\
&\les E[\phi, \tilde{F}](B_{r_1, \infty})+E_0r_1^{-\ep-1-\ga_0}+\iint_{\mathcal{D}_{r_1, r_2}}|F_{0\nu}-\tilde{F}_{0\nu}||J^\nu|dxdt\\
&\les E_0r_1^{-\ep-1-\ga_0}+\iint_{\mathcal{D}_{r_1, r_2}}\ep_1 r_1^\ep\frac{|\bar D\phi|^2}{(1+r)^{1+\ep}}+\ep_1^{-1}r_1^{-\ep}(1+r)^{\ep-3}|\phi|^2dxdt\\
&\les E_0r_1^{-1-\ga_0}(1+\ep_1^{-1})+\ep_1 r_1^\ep\iint_{\mathcal{D}_{r_1, r_2}}\frac{|\bar D\phi|^2}{(1+r)^{1+\ep}}dxdt
\end{align*}
for all $\ep_1>0$. Here notice that
\[
 E[\phi, \tilde{F}](B_{r_1, \infty})\leq E_0 r_1^{-1-\ga_0}.
\]
We also remark here that the regularity of the initial data is propagated in the exterior region as $E[\phi, \tilde{F}](B_{r_1, \infty})$ is the dominant term in the above estimate.
Take $\ep_1$ to be sufficiently small depending only on $q_0$, $\ep$, $\ga$, $\ga_0$ so that the last term in the last line can be absorbed. We derive the integrated local energy
estimate in the exterior region
\begin{align*}
 &r_1^\ep\iint_{\mathcal{D}_{r_1, r_2}}\frac{|\bar D\phi|^2+|\tilde{F}|^2}{(1+r)^{1+\ep}}+\frac{|\D\phi|^2+\tilde\rho^2+\si^2}{1+r}dxdt\les E_0r_1^{-1-\ga_0}.
\end{align*}
The previous estimate also gives control of the integral of $|F_{0\mu}-\tilde{F}_{0\mu}||J^\mu|$. Hence from the energy estimate \eqref{esout}, we obtain the energy control of the solution in the exterior region
\begin{equation}
\label{esMKGout1}
 E[\phi, \tilde{F}](S_{r_1, r_2})+E[\phi, \tilde{F}](\bar C_{r_1, r_2})\les E_0r_1^{-1-\ga_0}
\end{equation}
for all $R\leq r_1<r_2$. Let $r_2\rightarrow \infty$. We obtain the decay estimates for the chargeless part of the solution in the exterior region.

\section{Energy estimates in the interior region}
In the previous section, we obtained the energy estimates in the exterior region. In particular, we have estimates for the energy flux as well as the $r$-weighted energy
flux through $S_{R, \infty}$ or $S_0$ which is the outgoing null hypersurface starting from the sphere with radius $R$ on the initial hypersurface. These boundary information allows us to obtain
energy estimates for the solutions of the MKG equations in the interior region with foliation $\Si_{\tau}$.

\bigskip

From the $r$-weighted energy estimate \eqref{pWeMKGout1} and the energy estimate \eqref{esMKGout1} in the exterior region, we in particular have
\begin{equation*}
\begin{split}
&\int_{S_{R, r_2}}r^{1+\ga}(|D_L\psi|^2+r^2|\a|^2)dvd\om\les E_0 R^{\ga-\ga_0}\les E_0, \\
& E[\phi, F](S_{R, r_2})\les E[\phi, \tilde{F}](S_{R, r_2})+\int_{S_{R, r_2}}q_0^2 r^{-4}r^2dvd\om\les E_0
\end{split}
\end{equation*}
for all $r_2> R$. Since the implicit constant is independent of $r_2$, take $r_2\rightarrow \infty$. We obtain
\begin{equation}
 \label{bdcin}
\int_{S_{0}}r^{1+\ga}(|D_L\psi|^2+r^2|\a|^2)dvd\om+ E[\phi, F](\Si_{0})\les E_0.
\end{equation}
In the interior region we take $\tilde{F}=F$. From the integrated local energy estimate \eqref{ILEMKGin} and the energy estimate \eqref{esin} in the interior region, we obtain
\begin{equation}
 \label{ILEMKGin1}
\begin{split}
 & E[\phi, F](\Si_{\tau_2})+E[\phi, F](\bar C_{\tau_1, \tau_2})
+\int_{\tau_1}^{\tau_2}\int_{\Si_{\tau}}\frac{|\bar D\phi|^2+|F|^2}{(1+r)^{1+\ep}}+\frac{|\D\phi|^2+\rho^2+\si^2}{1+r}dxdt\\
&\les  E[\phi, F](\Si_{\tau_1})
\end{split}
\end{equation}
for all $0\leq \tau_1<\tau_2$. The improved integrated local energy estimate for $\D\phi$, $\rho$, $\si$ will be used to control the
boundary terms in the $r$-weighted energy estimates. Since $\tilde{F}=F$, the $r$-weighted energy identity \eqref{pWescain} implies that
\begin{align}
\notag
 &\int_{S_{\tau_2}}r^p|D_L\psi|^2+r^{p+2}|\a|^2dvd\om+\int_{\bar C_{\tau_1, \tau_2}}r^p(|\D\psi|^2+r^2\rho^2+r^2\si^2)dud\om\\
\label{pWeMKGin}
&+\int_{\tau_1}^{\tau_2}\int_{S_\tau}r^{p-1}\left(p(|D_L\psi|^2+r^2|\a|^2)+(2-p)(|\D\psi|^2+r^2\rho^2+r^2\si^2)\right)dvd\om d\tau\\
\notag
=&\int_{S_{\tau_1}}r^p|D_L\psi|^2+r^{p+2}|\a|^2dvd\om-\f12 R^p\int_{\tau_1}^{\tau_2}\int_{\om}|D_L\psi|^2-|\D\psi|^2+R^2(|\a|^2-\rho^2-\si^2) d\om dt.
\end{align}
For $p\in[0, 2]$, every term in the above identity has a positive sign except the last term which is the integral on the boundary of the cylinder with radius $R$. However, we observe that
that term is a constant multiple of $R^p$ which means that we can simply take $p=0$ in the above $r$-weighted energy identity in order to estimate the boundary term. From the above energy estimate \eqref{ILEMKGin1}
and the bound \eqref{bdcin}, we conclude that the energy flux through the null infinity $\bar C_{\tau_1, \tau_2}$ is finite. Since the scalar field $\phi$ vanishes at the spatial infinity initially,
we have that the scalar field must also vanish at null infinity. Then using Sobolev embedding and the relation \eqref{eqfact}, on $S_{\tau}$, $\tau\geq0$, we have
\[
 r\int_{\om}|\phi|^2d\om \leq \int_{S_\tau}|D_L\phi|^2r^2dvd\om\leq E[\phi, F](\Si_{\tau}).
\]
Here similar to Lemma \ref{Hardy} of Hardy's inequality, we can choose a particular gauge so that $\phi$ is real. Therefore the above estimate holds for all connection $D$. Note that
\[
 |D_L\psi|^2=r^2|D_L\phi|^2+L(r|\phi|^2),\quad \psi=r\phi.
\]
The previous estimate then implies that
\begin{align*}
 \int_{S_\tau}|D_L\psi|^2dvd\om \leq \int_{S_{\tau}}|D_L\phi|^2r^2dvd\om+\int_{\om}r|\phi|^2d\om\leq 2E[\phi, F](\Si_{\tau}).
\end{align*}
Note that $\D\psi=r\D\phi$. Let $p=0$ in the above $r$-weighted energy identity \eqref{pWeMKGin}. From the energy estimate \eqref{ILEMKGin1}, we can estimate the boundary terms as follows:
\begin{align*}
& \left|\f12 R^p\int_{\tau_1}^{\tau_2}\int_{\om}|D_L\psi|^2-|\D\psi|^2+R^2(|\a|^2-\rho^2-\si^2) d\om dt\right|\\
&\leq 2E[\phi, F](\Si_{\tau_1})+2E[\phi, F](\Si_{\tau_2})+E[\phi, F](\bar C_{\tau_1, \tau_2})+2\int_{\tau_1}^{\tau_2}\int_{S_\tau}\frac{|\D\phi|^2+\rho^2+\si^2}{r}dxdt\\
&\les E[\phi, F](\Si_{\tau_1}).
\end{align*}
Therefore for $0<p<2$, the $r$-weighted energy identity \eqref{pWeMKGin} implies that
\begin{align}
 \notag
 &\int_{S_{\tau_2}}r^p(|D_L\psi|^2+r^{2}|\a|^2)dvd\om+\int_{\tau_1}^{\tau_2}\int_{S_\tau}r^{p-1}(|D_L\psi|^2+|\D\psi|^2+r^2(|\a|^2+\rho^2+\si^2))dvd\om d\tau\\
\label{pWeMKGineq}
\les&\int_{S_{\tau_1}}r^p|D_L\psi|^2+r^{p+2}|\a|^2dvd\om+ E[F, \phi](\Si_{\tau_1}).
\end{align}
Here the implicit constant depends also on $R$ and $p$. Take $p=1+\ga$ in the above $r$-weighted energy inequality. The boundary condition \eqref{bdcin} shows that
\begin{align}
\label{pWeMKGinga}
&\int_{S_{\tau_2}}r^{1+\ga}|D_L\psi|^2+r^{\ga+3}|\a|^2dvd\om\\
\notag
&+\int_{\tau_1}^{\tau_2}\int_{S_\tau}r^{\ga}(|D_L\psi|^2+|\D\psi|^2)+r^{2+\ga}(|\a|^2+\rho^2+\si^2)dvd\om d\tau\les E_0.
\end{align}
We want to retrieve the integral of the energy flux through the whole hypersurface $\Si_{\tau}$ from the $r$-weighted energy estimate with $p=1$. We thus can make use of the integrated local energy
estimate \eqref{ILEMKGin1} restricted to the region $r\leq R$. First we note that when $r\leq R$, using Sobolev embedding, we have
\[
\int_{\om}|\phi|^2d\om\les \int_{r\leq R}|D\phi|^2+|\phi|^2dx.
\]
Then from the $r$-weighted energy estimate \eqref{pWeMKGineq} with $p=1$ we can show that
\begin{align}
\notag
\int_{\tau_1}^{\tau_2}E[\phi, F](\Si_{\tau})d\tau&=\int_{\tau_1}^{\tau_2}\int_{r\leq R}|D\phi|^2+|F|^2dxdt+\int_{\tau_1}^{\tau_2}\int_{\om}|\phi|^2(\tau, R, \om)d\om d\tau\\
 \label{pWeMKGin1}
&\quad +\int_{\tau_1}^{\tau_2}\int_{S_\tau}|D_L\psi|^2+|\D\psi|^2+r^2(|\rho|^2+|\a|^2+|\si|^2) dvd\om d\tau\\
\notag
&\les \int_{S_{\tau_1}}r|D_L\psi|^2+r^{3}|\a|^2dvd\om+ E[\phi, F](\Si_{\tau_1})\\
\notag
&\quad+\int_{\tau_1}^{\tau_2}\int_{r\leq R}|\bar D\phi|^2+|F|^2dxdt\\
\notag
&\les  \int_{S_{\tau_1}}r|D_L\psi|^2+r^{3}|\a|^2dvd\om+ E[\phi, F](\Si_{\tau_1}).
\end{align}
The $r$-weighted energy
inequality \eqref{pWeMKGinga} implies that there is a dyadic sequence $\{\tau_{n}\}$ such that
\[
 \int_{S_{\tau_{n}}}r^{\ga}|D_L\psi|^2+r^{\ga+2}|\a|^2dvd\om\les E_0 (1+\tau_n)^{-1},\quad \forall n.
\]
Since \eqref{pWeMKGinga} also implies that
\[
 \int_{S_{\tau}}r^{1+\ga}|D_L\psi|^2+r^{\ga+3}|\a|^2dvd\om\les E_0,\quad \forall \tau\geq 0,
\]
interpolation implies that
\[
 \int_{S_{\tau_{n}}}r|D_L\psi|^2+r^{3}|\a|^2dvd\om\les E_0 (1+\tau)^{-\ga}, \quad \forall n.
\]
The energy estimate \eqref{ILEMKGin1} implies that the energy flux $E[\phi, F](\Si_\tau)$ is nonincreasing with respect to $\tau$. Then from the $r$-weighted energy estimate \eqref{pWeMKGin1} for $p=1$, we
can show that
\begin{align*}
(\tau-\tau_n)E[\phi, F](\Si_{\tau})\les E_0 (1+\tau_n)^{-\ga}+E[\phi, F](\Si_{\tau_n}),\quad \forall \tau\geq \tau_n.
\end{align*}
In particular, for $n=0$, we obtain
\[
 E[\phi, F](\Si_{\tau})\les E_0(1+\tau)^{-1},\quad \forall \tau\geq 0.
\]
Since the sequence $\{\tau_n\}$ is dyadic, for $\tau\in [\tau_{n+1}, \tau_{n+2}]$, we have
\[
 E[\phi, F](\Si_{\tau})\les (\tau-\tau_n)^{-1}(E_0(1+\tau_n)^{-\ga}+E_0(1+\tau_n)^{-1})\les E_0(1+\tau)^{-1-\ga}.
\]
From the integrated local energy estimate \eqref{ILEMKGin1}, the above decay of the energy flux then implies the decay of the integrated local energy in the interior region. This finishes the proof of the
main Theorem.

\bibliography{shiwu}{}
\bibliographystyle{plain}

\bigskip

DPMMS, Centre for Mathematical Sciences, University of Cambridge,
Wilberforce Road, Cambridge, UK CB3 0WA

\textsl{Email address}: S.Yang@dpmms.cam.ac.uk
 \end{document}